\newtheorem{theorem}{Theorem}[section]
\newtheorem{thm}{Theorem A.\ignorespaces}
\newtheorem{definition}[theorem]{Definition}
\newtheorem*{theorem*}{Theorem}
\newtheorem{example}[theorem]{Example}
\newtheorem{exam}[thm]{Example A.\ignorespaces}
\newtheorem{problem}[theorem]{Problem}
\newtheorem{proposition}[theorem]{Proposition}
\newtheorem{remark}[theorem]{Remark}
\newcommand{\RR}{\mathbb{R}}
\newcommand{\R}{\mathbb{R}}
\newcommand{\T}{\mathbb R^e \!/\mathbb R {\bf 1}}
\newcommand{\tconv}{\text{tconv\,}}
\begin{document}

\title{Maximum Inscribed and Minimum Enclosing Tropical Balls of Tropical Polytopes and Applications to Volume Estimation and Uniform Sampling}
\author{David Barnhill \and Ruriko Yoshida  \and  Keiji Miura}
\date{}

\maketitle

\begin{abstract}
We consider a minimum enclosing and maximum inscribed tropical balls for any given tropical polytope over the tropical projective torus in terms of the tropical metric with the max-plus algebra.  We show that we can obtain such tropical balls via linear programming.  Then we apply minimum enclosing and maximum inscribed tropical balls of any given tropical polytope to estimate the volume of and sample uniformly from the tropical polytope.
\end{abstract}

\section{Introduction}

Tropical polytopes \citep{MS, joswigBook} are gaining popularity as a tropical counterpart of classical polytopes.
It is expected that tropical convex geometry has potential to solve many problems \cite{YZZ, YTMM, 10.1093/bioinformatics/btaa564, Trop_KDE, Trop_HAR}, just as classical convex geometry has many applications.
In fact, tropical polytopes over the tropical projective space have been studied thoroughly (for examples, see \cite{MS,joswigBook,TRAN20171} and references within) and have been applied in many areas, such as statistics, optimization, and phylogenomics \cite{MS,YZZ,YTMM,10.1093/bioinformatics/btaa564,TRAN20171,DS}. 

In polyhedral geometry, the minimum enclosing ball (or sphere) of a given polytope is the smallest ball in terms of the Euclidean metric, $l_2$-norm, containing the given polytope. The  maximum inscribed ball of a polytope is the largest ball in terms of $l_2$-norm that is contained in the polytope.  Computing the minimum enclosing ball and maximum inscribed ball of a polytope is a challenging problem \cite{konno,Murty}. The maximum inscribed ball of a given polytope is closely related to the interior point method for linear programming, and both the minimum enclosing ball and the  maximum inscribed ball of a given polytope have been applied to solving linear programming problems \cite{Murty}, to computing support vector machines \cite{C, CHW}, and to estimate the volume of polytopes \cite{HL}.  

In this paper, we consider {\em tropical balls} in terms of the {\em tropical metric} over the {\em tropical projective torus} with the max-plus algebra.  A tropical ball $B_l(x_0)$, which is formally defined in Definition \ref{df:tropicalball}, with the radius $l > 0$ at the center $x_0$ is defined as the set of all points over the tropical projective torus with the tropical metric $d_{\rm tr}$ defined in Definition \ref{eq:tropmetric} from $x_0$ less than or equal to $l$.  Put simply, a tropical ball is equivalent to a ball over an Euclidean space by replacing the $l_2$-norm with the tropical metric $d_{\rm tr}$.  A tropical polytope over the tropical projective torus is the smallest {\em tropically convex set} of finitely many points from the tropical projective torus.  This is an analogue of a classical polytope over an Euclidean space, which is the smallest convex set of finitely many points in the Euclidean space.  Similar to a minimum enclosing ball and maximum inscribed ball of a classical polytope, a minimum enclosing tropical ball of a tropical polytope is the smallest tropical ball containing the tropical polytope and a maximum inscribed tropical ball of a tropical polytope is the largest tropical ball contained in the given tropical polytope. 
In this paper we show that the linear programming problem formulated in \eqref{eq:maxball}-\eqref{eq:maxball4} below provides the center, $x_0$, and the radius, $R > 0$, of the maximum inscribed tropical ball and the linear programming problem formulated in~\eqref{eq:minball}-\eqref{eq:minball3} below provides the center, $y_0$, and the radius, $r >0$, of the minimum enclosing tropical ball of a given tropical polytope in the tropical projective torus.

It is hard to compute the volume of a polytope.
In general, a tropical polytope does not have to be classically convex in terms of the Euclidean metric and it can have some components which are of lower dimension than the tropical projective torus \cite{joswigBook} as shown in Figure \ref{fig:apicomplexa}.  
\begin{figure}
    \centering
    \includegraphics[width=0.8\textwidth]{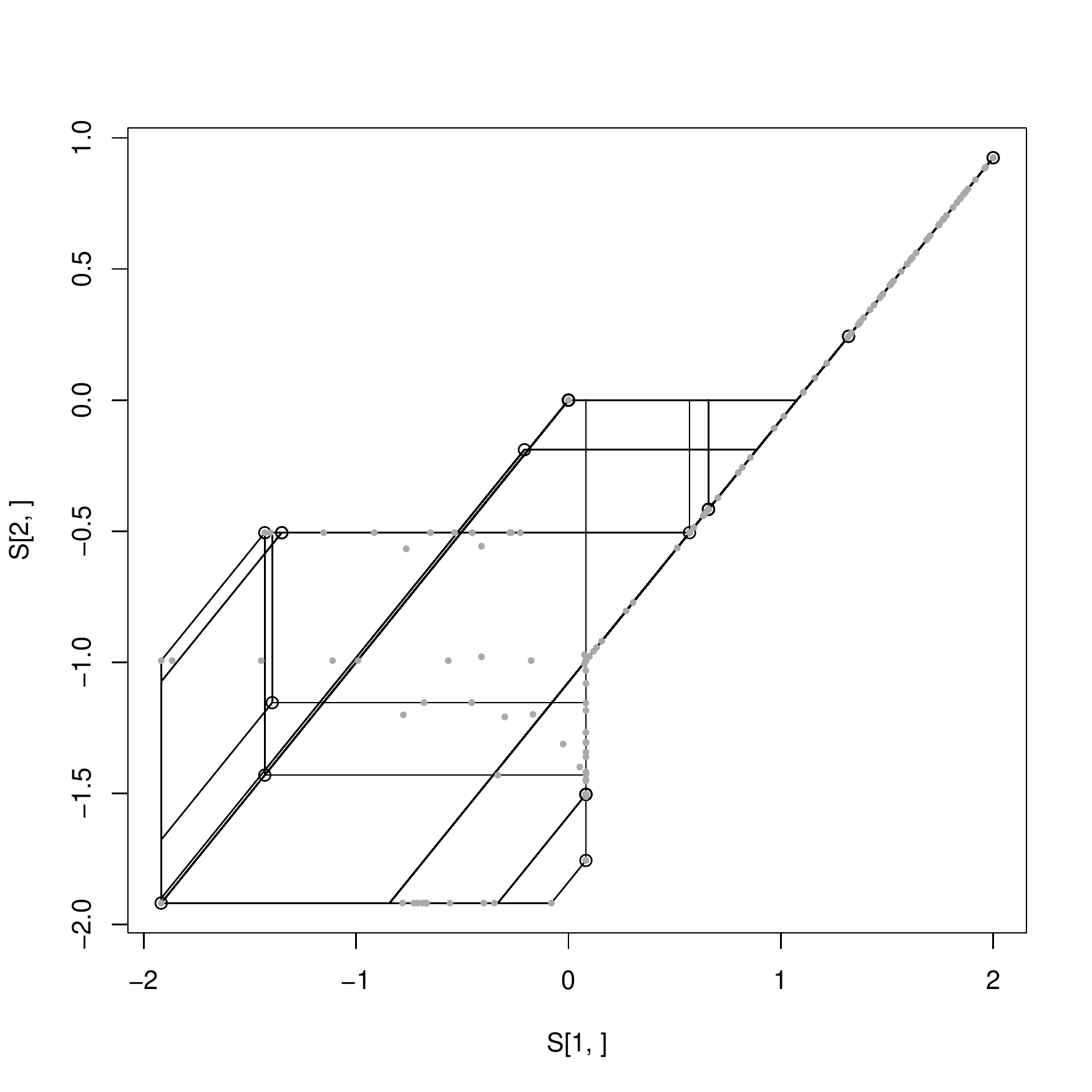}
    \caption{Best-fit tropical triangle over the space of phylogenetic trees for the Apicomplexa genome data from Kuo et al.~\cite{kuo} computed by tropical principal component analysis proposed by Yoshida et al.~\cite{YZZ}. Grey points on the tropical triangle are projected points via the tropical metric from observations in the data.}
    \label{fig:apicomplexa}
\end{figure}
Therefore, computing the Euclidean volume of a tropical polytope over the tropical projective torus is \#-P hard and even estimating the approximate volume of a tropical polytope is NP-hard in general \cite{Gaubert}.  In a Euclidean space, counting the number of lattice points in a classical polytope and computing the volume of the polytope are strongly related by Ehrhart theory \cite{Ehrhart}, and are \#-P hard problems even though an input polytope is described by an oracle \cite{Elekes}.
As a specific case, Brandenburg et al.~\cite{Zhang_Vol} apply tools from toric geometry to compute multivariate versions of volume, Ehrhart and $h^*$-polynomials of lattice {\em polytropes}, which are tropically and classically convex sets of finitely many lattice points.

In Euclidean space, Cousins and Vempala \cite{CV} apply a minimum enclosing ball and Gaussian distributions to estimating the volume of a classical polytope via simulated annealing sampling.  In this paper we develop a novel method to estimate the Euclidean volume of a tropical polytope over the tropical projective torus with the max-plus algebra using the minimum enclosing tropical ball of a tropical polytope, and a Hit-and-Run (HAR) sampler proposed by Yoshida, et al. \cite{Trop_HAR}.  We note that the ratio of the volume of a minimum enclosing tropical ball and the volume of a maximum inscribed tropical ball of a tropical polytope is an approximation of an acceptance rate of a sampler and in general this ratio can be very small.  Therefore, it is still difficult to estimate the volume of a tropical polytope in general.  

Sampling from a tropical polytope itself is an important issue.
Yoshida et al.~\cite{Trop_HAR} propose a HAR sampler using the tropical line segment with the tropical metric to sample from a tropically convex set.  They showed that their HAR sampler with the tropical metric can sample uniformly from a polytrope, so that using a decomposition of a tropical polytope into a set of tropical simplicies, which are polytropes, described in \cite{JoswigKulas+2010+333+352}, one can sample uniformly from a tropical polytope.  However, they have not described explicitly how to sample uniformly from a tropical polytope in general and they do not implement their proposed method explicitly.  Thus, in this paper, we apply a minimum enclosing tropical ball of a tropical polytope combined with a HAR sampler using the tropical metric to sample uniformly from a tropical polytope in general.

 This paper is organized as follows: Section~\ref{BASICS} provides basics of tropical geometry. In section~\ref{sec:balls}, we illustrate how to find the maximum inscribed and minimum enclosing tropical balls for a given tropical polytope.  We remind readers how to construct the hyperplane, or $h^*-representation$, of a tropical polytope with a given vertex set described in \cite{Zhang_Vol}. Using the $h^*-representation$, we show how to compute the maximum inscribed tropical ball using linear programming.  Next, we describe how we can, again, use linear programming to construct the minimum enclosing tropical ball.  In section~\ref{sec:Vol_est}, we show how, by sampling from the minimum enclosing tropical ball using the HAR method described in~\citep{Trop_HAR} we can obtain a volume estimate of the tropical polytope, $P$.  In addition, we show that if $P\in \mathbb{R}^e/\mathbb{R}\mathbf{1}$ is defined by $e$ vertices, methods exist to identify the union of full-dimensional cells, or \textit{(e-1)-trunk} (Definition~\ref{def:trunk}), of a tropical polytope, reducing the radius of the minimum enclosing tropical ball and therefore the number of sample points necessary to obtain a volume estimate of $P$. In section~\ref{sec:uni_samp}, we show that sampling from the minimum enclosing tropical ball and enumerating the tropical simplices defining the defining $P$ of the tropical polytope allows uniform sampling of the $(e-1)$-trunk of any tropical polytope.

 All computational experiments were conducted with {\tt R}, statistical computational tool.  The {\tt R} code used for this paper is available at \url{https://github.com/barnhilldave/Tropical-Balls.git}.

\section{Tropical Basics}\label{BASICS}

Throughout this paper, we consider the tropical projective torus $\mathbb R^e \!/\mathbb R {\bf 1}$, where\\ ${\bf 1}:= (1, 1, \ldots , 1) \in R^e$ is the vector of all ones. This means that we have
\[
(x_1 + c, x_2 + c, \ldots , x_e+ c) =  (x_1, x_2, \ldots , x_e)
\]
for any $c \in \mathbb{R}$ and for any vector $x:= (x_1, x_2, \ldots, x_e) \in \mathbb R^e \!/\mathbb R {\bf 1}$.
For more details, see \cite{MS} and \cite{joswigBook}.

\begin{definition}[Tropical Arithmetic Operations]
Under the tropical semiring with the {\em max-plus algebra} $(\,\mathbb{R} \cup \{-\infty\},\oplus,\odot)\,$, we have the tropical arithmetic operations of addition and multiplication defined as:
$$x \oplus y := \max\{x, y\}, ~~~~ x \odot y := x + y ~~~~\mbox{  where } x, y \in \mathbb{R}\cup\{-\infty\}.$$
Note that $-\infty$ is the identity element under addition $\oplus$ and $0$ is the identity element under multiplication $\odot$ over this semiring. 

On the other hand, under the tropical semiring with the {\em min-plus algebra} $(\,\mathbb{R} \cup \{\infty\},\boxplus,\odot)\,$, we have the tropical arithmetic operations of addition and multiplication defined as:
$$x \boxplus y := \min\{x, y\}, ~~~~ x \odot y := x + y ~~~~\mbox{  where } x, y \in \mathbb{R}\cup\{\infty\}.$$
\end{definition}


\begin{definition}[Tropical Scalar Multiplication and Vector Addition]
For any $x,y \in \mathbb{R}\,\cup\, \{-\infty\}$ and for any $v = (v_1, \ldots ,v_e),\; w= (w_1, \ldots , w_e) \in (\mathbb{R}\cup\{-\infty\})^e$, we have tropical scalar multiplication and tropical vector addition defined as:
$$x \odot v \oplus y \odot w := (\max\{x+v_1,y+w_1\}, \ldots, \max\{x+v_e,y+w_e\}).$$
\end{definition}

\begin{definition}[Tropical Matrix Operations]
    Suppose we have two $(k_1\times k_2)$-dimensional matrices $A,B$ and an $(k_3\times k_1)$-dimensional matrix $C$ with entries in $\mathbb{R}\cup\{-\infty\}$.  Then we can define the tropical matrix operations $A\oplus B$ and $A\otimes C$ in analogy with the ordinary matrix operations. Specifically, we have
\[(A\oplus B)_{ij} = A_{ij}\oplus B_{ij},\ \  (A\otimes C)_{ij} = \bigoplus_{\ell=1}^{k_1} A_{i\ell}\otimes C_{\ell j}.\]
\end{definition}

\begin{definition}[Generalized Hilbert Projective Metric]
\label{eq:tropmetric} 
For any points $v:=(v_1, \ldots , v_e), \, w := (w_1, \ldots , w_e) \in \mathbb R^e \!/\mathbb R {\bf 1}$ where $[e]:=\{1,\ldots,e\}$, the {\em tropical distance} (also known as {\em tropical metric}) $d_{\rm tr}$ between $v$ and $w$ is defined as:
\begin{equation*}
d_{\rm tr}(v,w)  := \max_{i \in [e]} \bigl\{ v_i - w_i \bigr\} - \min_{i \in [e]} \bigl\{ v_i - w_i \bigr\}.
\end{equation*}
\end{definition}

\begin{definition}[Tropical Polytopes]\label{def:polytope}
Suppose we have $S \subset \mathbb R^e \!/\mathbb R {\bf 1}$. If 
\[
x \odot v \oplus y \odot w \in S
\]
for any $x, y \in \R$ and for any $v, w \in S$, then $S$ is called {\em tropically convex}.
Suppose $V = \{v^1, \ldots , v^s\}\subset \mathbb R^e \!/\mathbb R {\bf   1}$.  The smallest tropically convex subset containing $V$ is called the {\em tropical convex hull} or {\em tropical polytope} of $V$ which can be written as the set of all tropical linear combinations of $V$
$$ \mathrm{tconv}(V) = \{a_1 \odot v^1 \oplus a_2 \odot v^2 \oplus \cdots \oplus a_s \odot v^s \mid  a_1,\ldots,a_s \in \R \}.$$
The smallest subset $V'\subseteq V$ such that
\[
\mathrm{tconv}(V') = \mathrm{tconv}(V)
\]
is called a minimum set or a generating set with $|V'|$ being the cardinality of $V'$.  For $P=\tconv(V')$ the boundary of $P$ is denoted $\partial P$.  
\end{definition}

We call a {\em max-tropical polytope} if a tropical polytope is defined in terms of the max-plus algebra and we call a {\em min-tropical polytope} if a tropical polytope is defined in terms of the min-plus algebra.



\begin{definition}[Polytropes (See~\citep{JoswigKulas+2010+333+352})]
A classically convex tropical polytope is called a {\em polytrope}. 
\end{definition}

\begin{definition}[Covector Decomposition (From~\citep{DS} and~\citep{TVOL_2})]\label{def:cov_D}
    A tropical polytope may be decomposed into a polyhedral complex of polytropes known as a \textit{covector decomposition}  of $P$, denoted as $\mathcal{C}_P$.
\end{definition}

\begin{definition}[Dimension of a Tropical Polytope]
    The dimension of a tropical polytope, $P\in \mathbb{R}^e/\mathbb{R}\mathbf{1}$, is defined by the polytrope of maximal dimension in $\mathcal{C}_P$ and is denoted as $dim(P)$.
\end{definition}

Next we remind the reader of the definition of a projection in terms of the tropical metric onto a tropical polytope.
The tropical projection formula can be found as Formula 5.2.3 in \cite{MS}. 
\begin{definition}[Projections of Tropical Points]\label{def:proj}
Let $V:= \{v^1, \ldots, v^s\} \subset \mathbb{R}^e/{\mathbb R} {\bf 1}$ and let $P = \tconv(v^1, \ldots, v^s)\subseteq \mathbb R^{e}/\RR{\bf 1}$ be a tropical polytope with its vertex set $V$.
For $x \in \T$, let
\begin{equation}\label{eq:tropproj} 
\pi_P (x) \!:=\! \bigoplus\limits_{l=1}^s \lambda_l \odot  v^l, ~ ~ {\rm where} ~ ~ \lambda_l \!=\! {\rm min}(x-v^l).
\end{equation}

\noindent Then 
\[
d_{\rm tr}(x, \pi_{P} (x))  \leq d_{\rm tr}(x, y)
\]
for all $y \in P$ and $\pi_P(x) \in P$.  In other words, $\pi_P (x)$ is the projection of $x \in \T$ in terms of the tropical metric $d_{\rm tr}$ onto the tropical polytope $P$.
\end{definition}

The following definitions describe the relationship between tropical hyperplanes and tropical polytopes. Specifically, they illustrate the connection between max-tropical polytopes and min-tropical hyperplanes.



\begin{definition}[Tropical Hyperplane from~\cite{YTMM}]\label{def:trop_hyp}
    For any $\omega:=(\omega_1,\ldots,\omega_e)\in\mathbb{R}^e/\mathbb{R}\mathbf{1}$, the max-tropical hyperplane defined by $\omega$, denoted as $H_\omega^{\max}$, is the set of points $x\in \mathbb{R}^e/\mathbb{R}\mathbf{1}$ such that 
\begin{equation}
  \max_{i \in [e]} \Big\{\omega_i + x_i \Big\}
\end{equation}
\noindent is attained at least twice. Similarly, a min-tropical hyperplane denoted as $H^{\min}_\omega$, is the set of points $x\in \mathbb{R}^e/\mathbb{R}\mathbf{1}$ such that  

\begin{equation}
   \min_{i \in [e]} \Big\{\omega_i + x_i \Big\}
\end{equation}

\noindent is attained twice.  If it is clear from a content, we simply denote $H_\omega$ as a tropical hyperplane in terms of the min-plus or max-plus algebra where $\omega$ is the {\em normal vector} of $H_\omega$.
\end{definition}

\begin{definition}[Sectors from~\cite{YTMM}]\label{def:sector}
    Every tropical hyperplane, $H_{\omega}$, divides the tropical projective torus, $\mathbb{R}^e/\mathbb{R}\mathbf{1}$ into $e$ connected components, which are {\em open sectors}
    \[S^i_\omega := \{x\in \mathbb{R}^e/\mathbb{R}\mathbf{1}\,|\, \omega_i + x_i > \omega_j +x_j, \forall j\neq i\},\; i= 1,\ldots, e.\]
    \noindent These {\em closed sectors} are defined as

    \[\overline{S}^i_\omega := \{x\in \mathbb{R}^e/\mathbb{R}\mathbf{1}\,|\, \omega_i + x_i \geq \omega_j +x_j, \forall j\neq i\},\; i= 1,\ldots, e.\]
\end{definition}
\begin{definition}[Tropical Hyperplane Arrangements]\label{def:arrange}
    For a given set of points, $V=\{v^1,\ldots,v^s\}$,  tropical hyperplanes with apices at each $v^i\in V$ represent the {\em tropical hyperplane arrangement} of $V$, $\mathcal{A}(V)$, where
\[\mathcal{A}(V):=\{H_{-v^1},\ldots,H_{-v^s}\}.\]

\noindent If we consider a collection of tropical hyperplanes defined in terms of the max-plus algebra, we call this arrangement a {\em max-tropical hyperplane arrangement} denoted $\mathcal{A}^{\max}(V)$.  Likewise, considering tropical hyperplanes defined in terms of the min-plus algebra is called a {\em min-tropical hyperplane arrangement} denoted $\mathcal{A}^{\min}(V)$.
\end{definition}

\begin{definition}[Cells]\label{def:cells}
    For a given hyperplane arrangement, $\mathcal{A}(V)$, a {\em cell} is defined as the intersection of a finite number of closed sectors. Cells may be {\em bounded} or {\em unbounded}. Bounded cells are polytropes. 
\end{definition}

\begin{definition}[Bounded Subcomplex (See~\citep{DS})]\label{def:subcomplex}
For a vertex set, $V$,  $\mathcal{A}(V)$ defines a collection of bounded and unbounded cells which is known as a {\em cell decomposition}.  The union of bounded cells defines the {\em bounded subcomplex}, $\mathcal{K}(V)$.   
\end{definition}

\begin{theorem}[Theorem 2.2 in \cite{Dochtermann_2012}]
\label{thm:min_hyp}
A $\max$-tropical polytope, $P$, is the union of cells in $\mathcal{K}(V)$ of the {\em cell decomposition} of the tropical projective torus induced by $\mathcal{A}^{\min}(V)$.
\end{theorem}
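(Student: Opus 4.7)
I will combine two dual descriptions of a point $x \in \T$: membership in $P$ via the tropical projection formula of Definition~\ref{def:proj}, and membership in a bounded cell of $\mathcal{A}^{\min}(V)$ via the combinatorial type of $x$. These two descriptions turn out to coincide on the nose. For each $l \in [s]$, define the type set
\[
I_l(x) \;:=\; \bigl\{\, i \in [e] \,:\, x_i - v^l_i = \min_{j \in [e]}(x_j - v^l_j) \,\bigr\},
\]
so that $x$ lies in the closed $i$-th sector of the min-tropical hyperplane $H^{\min}_{-v^l}$ exactly when $i \in I_l(x)$. A cell of $\mathcal{A}^{\min}(V)$ is (the closure of) a locus on which the tuple $(I_1,\dots,I_s)$ is constant.

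\textbf{Step 1: $P = \{\,x : \bigcup_l I_l(x) = [e]\,\}$.} Since $\pi_P$ is idempotent with image $P$, we have $x \in P \iff \pi_P(x) = x$. Unpacking coordinate $i$,
\[
\pi_P(x)_i \;=\; \max_{l \in [s]} \Bigl( \min_{j \in [e]}(x_j - v^l_j) + v^l_i \Bigr).
\]
Taking $j = i$ inside the inner min gives $\pi_P(x)_i \le x_i$ unconditionally, with equality if and only if some $l$ achieves $x_i - v^l_i = \min_j(x_j - v^l_j)$, i.e.\ if and only if $i \in I_l(x)$ for some $l$. So $\pi_P(x) = x$ (for all coordinates simultaneously) is equivalent to every $i \in [e]$ being covered by $\bigcup_l I_l(x)$.

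\textbf{Step 2: bounded cells are exactly the cells with $\bigcup_l I_l = [e]$.} If some $i_0 \notin \bigcup_l I_l(x)$, then for every $l$ the strict inequality $x_{i_0} - v^l_{i_0} > \min_j(x_j - v^l_j)$ holds, and it is preserved under adding any $t \ge 0$ to the $i_0$-th coordinate alone; this gives an unbounded ray in $\T$ lying inside the cell. Conversely, suppose coverage holds and let $d \in \T$ be a nonzero direction, so not a multiple of $\mathbf{1}$; pick $i_0, j_0$ with $d_{i_0} > d_{j_0}$. By coverage, $i_0 \in I_{l_0}(x)$ for some $l_0$, meaning $x_{i_0} - v^{l_0}_{i_0} \le x_{j_0} - v^{l_0}_{j_0}$. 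Along $x + t d$, the left side grows at rate $d_{i_0}$ and the right at rate $d_{j_0} < d_{i_0}$, so for $t$ sufficiently large the inequality reverses and $x + t d$ leaves the cell. Hence no direction supports an unbounded ray, and the cell is bounded. Combining Steps~1 and~2 yields $P = \mathcal{K}(V)$ as required.

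\textbf{Main obstacle.} The hard half is the converse direction of Step~2: one must rule out unbounded rays in every direction of the quotient torus, not merely in coordinate directions, and leverage the coverage condition against the full system of sector inequalities. Once the projection formula is interpreted sector-by-sector as in Step~1, the remaining reduction is clean, but keeping the choice of witness index $l_0$ and the rate comparison along $d$ straight is the most delicate point of the argument.
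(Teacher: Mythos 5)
The paper does not prove this statement: it simply imports it as Theorem~2.2 of Dochtermann et al.\ (and, tracing further back, it is Theorem~15 combined with Proposition~16 of Develin--Sturmfels, \emph{Tropical Convexity}). So there is no in-paper argument to compare against. Judged on its own, your proof is correct and is essentially the classical Develin--Sturmfels argument: characterize $x\in P$ via the tropical projection formula (your Step~1, which recovers the ``covering type'' criterion $\bigcup_l I_l(x)=[e]$), and characterize bounded cells of $\mathcal{A}^{\min}(V)$ by the same covering condition (your Step~2, via the recession-cone argument). Two points are worth tightening. First, in Step~1 you work with a fixed representative of $x$ in $\mathbb{R}^e$, so ``$\pi_P(x)=x$'' in $\T$ could a priori mean $\pi_P(x)=x+c\mathbf{1}$; the fix is to observe that for each $l$ the index $j^*\in I_l(x)$ achieving $\lambda_l$ gives $\pi_P(x)_{j^*}\ge\lambda_l+v^l_{j^*}=x_{j^*}$, so together with your bound $\pi_P(x)\le x$ the constant $c$ is forced to be $0$ and quotient equality reduces to coordinatewise equality. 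Second, the concluding sentence ``Combining Steps~1 and~2 yields $P=\mathcal{K}(V)$'' needs one more observation to glue the two: the covering condition is inherited by closed cells from their relative interiors (types only grow on the boundary, $I_l(y)\subseteq I_l(x)$ for $x$ in the closure of the cell of $y$), so $\{x:\bigcup_l I_l(x)=[e]\}$ is exactly the union of the bounded closed cells. With those small additions the argument is complete; there is no genuine gap.
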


Theorem~\ref{thm:min_hyp} describes a $\mathcal{K}(V)$ as a collection of bounded cells induced by $\mathcal{A}(V)$.  Definition~\ref{def:cov_D} describes $P$ in terms of a union of polytropes.  The bounded cells induced by $\mathcal{A}^{\min}(V)$ are these polytropes. Therefore, the union of the polytropes in $\mathcal{C}_P$ defines $\mathcal{K}(V)$. Throughout this paper we are interested in sampling the union of $(e-1)$-dimensional polytropes belonging to $\mathcal{K}(V)$.  The union of $(e-1)$-dimensional polytropes is described in the  following definition.



\begin{definition}[{\em i-trunk} and {\em i-tentacles} (Definition 2.1 in~\citep{TVOL_2})]\label{def:trunk}
    Let $P$ be a tropical polytope and let $i\in[e-1]$ where $[e-1]=\{1,\ldots,e-1\}$.  Let $\mathcal{F}_P$ be the family of relatively open tropical polytopes in $\mathcal{C}_P$. For any $T\in \mathcal{F}_P$, $T$ is called an {\em{i-tentacle element}} of $\mathcal{F}_P$ if it is not contained in the closure of any $(i+1)$-dimensional tropical polytope in $\mathcal{F}_P$ where the dimension of $T$ less than or equal to $i$.  The {\em{i-trunk}} of $P$, is defined as
    \[Tr_i(P):=\bigcup\Big\{F\in\mathcal{F}_P: \exists \;G\in\mathcal{F}_P \text{ with } \dim(G)\geq i \text{ such that } F\subseteq G\Big\}\]
where $dim(G)$ is the dimension of $G \subset \mathcal{F}_P$.
    \noindent The $Tr_i(P)$ represents the portion of the $\mathcal{K}(V)$ with $(i-1)$-tentacles removed.  The minimum enclosing ball containing containing only $Tr_i(P)\subseteq P$ is denoted $B_k(Tr_i(P))$.
\end{definition}

\begin{remark}[Proposition 2.3 in~\citep{Zhang_Vol}]
    The $Tr_{e-1}(P)$ of a tropical polytope, $P\in\mathbb{R}^e/\mathbb{R}\mathbf{1}$ is a tropical polytope.
\end{remark}

\begin{example}
Consider the tropical polytope, $P=\{(0,0,0),(0,-1,1),(0,2,2),(0,1,-1)\}$.  The $Tr_2(P)$ is the gray portion shown in Figure~\ref{fig:trunk_ex}.
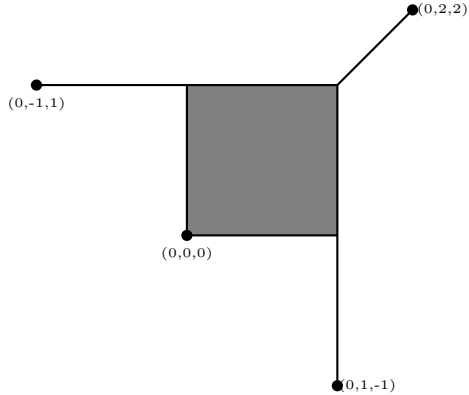
\begin{figure}[H]
    \centering

\begin{tikzpicture}[node distance={30mm}, thick, main/.style = {draw, circle}]
\fill[gray] (1,1) rectangle (3,3);
\node[] (2) at (-1,2.75) {\tiny(0,-1,1)};
\node[] (5) at (1,.75) {\tiny(0,0,0)}; 
\node[] (6) at (3.4,-1) {\tiny(0,1,-1)};
\node[] (7) at (4.4,4) {\tiny(0,2,2)};
\draw [-](1,1) to (1,3);
\draw [-](1,1) to (3,1);
\draw [-](1,3) to (3,3);
\draw [-](3,1) to (3,3);
\draw [-](1,3) to (-1,3);
\draw [-](3,1) to (3,-1);
\draw [-](3,3) to (4,4);
\node at (1,1)[circle,fill,inner sep=1.5pt]{};
\node at (-1,3)[circle,fill,inner sep=1.5pt]{};
\node at (3,-1)[circle,fill,inner sep=1.5pt]{};
\node at (4,4)[circle,fill,inner sep=1.5pt]{};
\end{tikzpicture}
    \caption{A tropical polytope, $P$, in $\mathbb{R}^3/\mathbb{R}\mathbf{1}$ defined by four vertices.  The $Tr_{2}(P)$ is the portion in gray.} 
    \label{fig:trunk_ex}
\end{figure}
\end{example}
All methods described in this paper involve sampling from the $Tr_{e-1}(P)$ of a tropical polytope $P$.  However, tropical polytopes can exist without a $Tr_{e-1}(P)$ as shown by Joswig et al. (See Figure 1 (left) in~\citep{JoswigKulas+2010+333+352}). Therefore, all tropical polytopes considered in this research are assumed to contain a $Tr_{e-1}(P)$.

\section{Using Optimization to Find Maximum Inscribed and Minimum Enclosing Tropical Balls}\label{sec:balls}


In this section we illustrate how to find the hyperplane-representation of a {\em tropical simplex} $P_\Delta$, in order to compute the {\em maximum incscribed ball} of $P_\Delta$. We then show how to calculate the {\em minimum enclosing ball} of any tropical polytope $P$, using a linear programming formulation.  Tropical simplices and tropical balls will be leveraged throughout this paper thus we offer the following definitions.

\begin{definition}[Tropical Simplex]
    A tropical simplex is a tropical polytope that possess a minimum vertex, or generating, set $V'$ such that $|V'| = e$. A tropical simplex is denoted $P_\Delta$.
\end{definition}

\begin{definition}[Tropical Ball]\label{df:tropicalball}
    A tropical ball, $B_l(x_0)$, around $x_0 \in \mathbb{R}^e/\mathbb{R}{\bf 1}$ with a radius $l > 0$ is defined as follows:
\[
B_l(x_0) = \{y \in \mathbb{R}^e/\mathbb{R}{\bf 1}\, |\, d_{\rm tr}(x_0, y) \leq l\}.
\]
For a tropical polytope, $P$, the {\em minimum enclosing ball}, denoted as $B_r(P)$, is the tropical ball of smallest radius $r$ that fully contains $P$.  For the same $P$, the tropical ball with maximum radius $R$, that is fully contained in $P$ is called the {\em maximum inscribed ball} and is denoted $B_R(P)$.  
\end{definition}

\subsection{Hyperplane Representation of Tropical Polytopes with the Max-Plus Algebra}

Brandenburg et al.~in~\cite{Zhang_Vol} show that the hyperplane-representation of a polytrope $P$, denoted as $h^*-representation(P)$, can be constructed from an associated \textit{Kleene Star} weight matrix, $\mathbf{m^*}$. This is shown in the following definition and extends it to any $P_\Delta$.

\begin{definition}[Hyperplane Representation of a Tropical Simplex (See Proposition 2.14 in~\citep{Zhang_Vol})]\label{def:h_rep}
    For a tropical simplex that is a polytrope, $P_\Delta\in \mathbb{R}^e/\mathbb{R}\mathbf{1}$, $P_\Delta$ may be defined by the intersection of a collection of classical half-spaces.  These half-spaces are constructed using a \textit{Kleene Star} weight matrix, $\mathbf{m^*}$, where each $m_{ij}$ is the $(i,j)$-th entry in $\mathbf{m^*}$.  The half-spaces defining $P_\Delta$ are constructed as follows
    \begin{equation}\label{eq:maxhyperp}
    P_\Delta=\{y\in\mathbb{R}^e\;|\; y_j-y_i\leq -m_{ij}, y_1=0, m_{ij}\in \mathbf{m^*}, i \neq j\}.
\end{equation}

\noindent We denote this intersection of half-spaces as $h^*-representation(P_\Delta)$.  By contrast, the vertex representation of $P_\Delta$ using a vertex set, $V$, is denoted as $v-representation(P_\Delta)$.  For a tropical simplex, $P_\Delta$, that is not classically convex, the $h^*-representation(P_\Delta)$ only defines the {\em $(e-1)$-trunk} of $P$ (See Definition~\ref{def:trunk}). 
\end{definition}

For any weight matrix $\mathbf{m}\in\mathbb{R}^{e\times e}$ associated with a complete graph of $e$ nodes with no positive cycles, $m_{ij}$ represents the weight from node $i$ to $j$ with the diagonal being all zeroes. The weight matrix, $\mathbf{m^*}$, can be constructed by taking the $(e-1)$-th tropical power of $\mathbf{m}$. that is, $\textbf{m}^{\odot e-1}$ with each $m_{ij}\in \mathbf{m^*}$ being the maximal path from node $i$ to node $j$~\citep{LP,TranKleene}.  From the new matrix $\mathbf{m^*}$, we can obtain $h^*-representation(P)$ using~\eqref{eq:maxhyperp}~\citep{Zhang_Vol}.

 In this paper we start with a vertex set, $V'$, defining a tropical simplex, $P_\Delta=\tconv(V')$ and where we assume that  $V'$ is a minimum vertex set.  Using $V'$ we then construct $\mathbf{m^*}$.  By Proposition 2.14 in~\citep{Zhang_Vol}, there is a correspondence between the $\mathbf{m^*}$ and the points in $V'$.  
 
 To build $\mathbf{m^*}$, we must first calculate the {\em tropical determinant} of a matrix $A$, using Definition~\ref{def:trop_det}, where column vectors, $A_j$, are tropical points $V'$. 

\begin{definition}[Tropical Determinant]\label{def:trop_det}
    Let $q$ be a positive integer.  For any squared tropical matrix $A$ of size $q \times q$ with entries in $\mathbb{R}\;\cup\;\{-\infty\}$, the tropical determinant of $A$ is defined as:

    \[tdet(A):=\max_{\sigma\in S_q}\{A_{\sigma(1),1}+A_{\sigma(2),2}+\ldots+A_{\sigma(q),q}\},\]
where $S_q$ is all the permutations of $[q]:=\{1,\ldots,q\}$, and $A_{i,j}$ denotes the $(i,j)$-th entry of A.  The tropical matrix is singular if:
\begin{enumerate}
    \item A is non-square;
    \item the $tdet(A)=-\infty$; or
    \item at least two permutations achieve the maximum $tdet(A)$.
\end{enumerate}
For a square matrix, it is equivalent to saying the row or column vectors lie in a tropical hyperplane (see Proposition 3.4 in~\citep{joswigBook}).
\end{definition}

Finding $tdet(A)$ can be reduced to evaluating a \textit{linear assignment} problem where $e$ tasks are assigned to $e$ workers to achieve maximum benefit. Each $i,j \in [e]$, the $(i, j)$th element of the matrix $A$, $A_{i,j}$, represents the benefit gained from assigning task $i$ to worker $j$.  The $tdet(A)$ provides the permutation that achieves this maximum benefit~(See Observation 3.1 in~\citep{joswigBook}).

After calculating $tdet(A)$, let $\sigma_P$ be the permutation 
satisfying $tdet(A)$ as shown in Definition~\ref{def:trop_det}.  
The output of Algorithm \ref{alg:tdet} is an $e \times e$ tropical matrix $A'$ whose diagonal is $A_{\sigma_P(i),i}$.

\begin{algorithm}[H]
\caption{Calculating the $tdet(A)$ and $\sigma_P\in S_e$ for a square matrix, $A$.} \label{alg:tdet}
\begin{algorithmic}
\State {\bf Input:} A square matrix, $A$, with the column vectors representing the set of vertices in $V'$.
\State {\bf Output:} $tdet(A)$ and $\sigma_P \in S_e$.
\State Enumerate each permuation, $\sigma_k$, of the row indices of $A$ where $\sigma_k \in S_e$.
\State Calculate $t(\sigma_k)=\sum\limits_{i=1}^e A_{\sigma_k(i),i}$ for each $\sigma_k\in S_e$.
\State Let $tdet(A)=$\text{$\max\limits_{k}\,$}$(t(\sigma_k))$ and $\sigma_P=\sigma_k$\\
\Return $tdet(A)$ and $\sigma_P$.
\end{algorithmic}
\end{algorithm}

\begin{example}\label{ex:tdet}
    Consider the polytrope, $P_\Delta:=\tconv(\{(0,0,0),(0,2,5), (0,3,1)\})$.  Letting the vertices be the column vectors of a square matrix $A$, we find $tdet(A)=8$ with $\sigma_P=(1,3,2)$ where the index of $\sigma_P$ represents the column and the value at that index represents the row index of that column. This tells us that coordinates of the matrix, $A$, contributing to $tdet(A)$ are $A_{1,1}=0$, $A_{3,2}=5$, and $A_{2,3}=3$.  Reordering the columns from smallest row index to largest yields the new matrix, $A'$, as follows,
\[A=\left(\begin{array}{ccc}
    0 & 0 & 0\\
    0 & 2 & 3\\
    0 & 5 & 1
\end{array}\right)\rightarrow 
\left(\begin{array}{ccc}
    0 & 0 & 0\\
    0 & 3 & 2\\
    0 & 1 & 5
\end{array}\right)=A'.
\]
\end{example}

The final step to find $\mathbf{m^*}$ is to add $-A'_{kk}\in A'$ to each element of the column, $A'_k$ for $k \in [e]$.  The result of this operations finalizes the construction of $\mathbf{m^*}$.  Once $\mathbf{m^*}$ is constructed, we can extract $h^*-representation(P_\Delta)$ using~\eqref{eq:maxhyperp}. To illustrate extracting $h^*-representation(P_\Delta)$, we use Algorithm~\ref{alg:hyperplane_rep}.



\begin{algorithm}[H]
\caption{Extracting $h^*-representation(P_\Delta)$ of a tropical simplex $P\in\mathbb{R}^e/\mathbb{R}\mathbf{1}$.} \label{alg:hyperplane_rep}
\begin{algorithmic}
\State {\bf Input:} A square matrix, $A$, representing a set of vertices in minimum vertex set $V'$ where the vertices are the columns of $A$.
\State {\bf Output:} $h^*-representation(P_\Delta)$.
\State Calculate $tdet(A)$ and identify $\sigma_P \in S_e$ defining $tdet(A)$ using Algorithm~\ref{alg:tdet}.
\State Reorder the columns to form a new matrix $A'$ such that $A_{\sigma_P(i),i}=A'_{kk}$ where $k=\sigma_P(i)$.
\State For a matrix, $\mathbf{m^*}\in \mathbb{R}^{e \times e}$, let $m_{jk}=A'_{jk}-A'_{kk}$ for all $j, k\in [e]$.
\State Compute $h^*-representation(P_\Delta)$ from, $\mathbf{m^*}$, using~\eqref{eq:maxhyperp}.\\
\Return $h^*-representation(P_\Delta)$.
\end{algorithmic}
\end{algorithm}


\begin{example}[Example~\ref{ex:tdet} cont'd]\label{ex:poly_hyper}
Continuing with the polytrope, $P_\Delta$, from the previous example we move to the next step to construct the $h^*-representation(P_\Delta)$. After finding the $tdet(A)$ and reordering the columns to form the new matrix, $A'$, $-A'_{kk}$ is added to each element of the column vector $A'_k$ to construct $\mathbf{m^*}$.  The result follows,

\[\left(\begin{array}{ccc}
    0 & -3 & -5\\
    0 &  0 & -3\\
    0 & -2 &  0
\end{array}\right)=\mathbf{m^*}.\]
  The $h^*-representation(P_\Delta)$, as described in~\eqref{eq:maxhyperp}, is shown below

\begin{eqnarray*}
    y_2 - y_1 \leq& 3\\
    y_3 - y_1 \leq& 5\\
    y_1 - y_2 \leq& 0\\
    y_3 - y_2 \leq& 3\\
    y_1 - y_3 \leq& 0\\
    y_2 - y_3 \leq& 2\\
    y_1 =&\;0.
\end{eqnarray*}
\end{example}
\noindent Figure~\ref{fig:max_kleene} shows the directed graph and polytrope defined by $\mathbf{m^*}$.  The directed graph represents the maximum distance between any two nodes as determined by $\mathbf{m^*}$.

While \cite{Zhang_Vol}
specifically addresses $h^*-representation(P_\Delta)$ for polytropes, this method can also be used for any tropical simplex.  However, the $h^*-representation(P_\Delta)$ only defines the $Tr_{e-1}(P_\Delta)$, resulting in several redundant constraints.
    \begin{figure}[H]
\centering
\begin{tikzpicture}[node distance={30mm}, thick, main/.style = {draw, circle}]
\node[main] (1) at (1,1) {\scriptsize$1$}; 
\node[main] (3) at (3,4) {\scriptsize$3$}; 
\node[main] (2) at (5,1) {\scriptsize$2$};
\draw [->](2) to [out=200,in=340,looseness=1] node [anchor=north] {\scriptsize$0$} (1);
\draw [->](1) to [out=80,in=210,looseness=1] node [anchor=south east] {\scriptsize$-5$} (3);
\draw [->](3) to [out=330,in=100,looseness=1] node [anchor=south west] {\scriptsize$-2$} (2);
\draw [->](3) to [out=250,in=40,looseness=1] node [anchor=north west] {\scriptsize $0$}(1);
\draw [->](1) to [out=20,in=160,looseness=1] node [anchor=south] {\scriptsize$-3$}(2);
\draw [->](2) to [out=140,in=290,looseness=1] node [anchor=north east] {\scriptsize$-3$}(3);
\end{tikzpicture}~
\begin{tikzpicture}[node distance={30mm}, thick, main/.style = {draw, circle}]
\fill[gray] (1,1) rectangle (3,3);
\fill[gray] (1,2) rectangle (2,2.5);
\fill[gray] (4,5) rectangle (3.5,3);
\fill[gray] (3,3) rectangle (4,5);
\fill[gray] (4,2) rectangle (2.9,3.1);
\draw[gray, fill] (1,3) -- (3,5) -- (3,3) -- cycle;
\draw[gray, fill] (3,1) -- (4,2) -- (3,2) -- cycle;
\node[] (2) at (0.75,.75) {\tiny(0,0,0)};
\node[] (5) at (2.75,5.15) {\tiny(0,2,5)}; 
\node[] (6) at (4.4,2) {\tiny(0,3,1)};
\draw [-](1,1) to (1,3);
\draw [-](1,1) to (3,1);
\draw [-](1,3) to (3,5);
\draw [-](3,5) to (4,5);
\draw [-](4,5) to (4,2);
\draw [-](4,2) to (3,1);
\node at (1,1)[circle,fill,inner sep=1.5pt]{};
\node at (4,2)[circle,fill,inner sep=1.5pt]{};
\node at (3,5)[circle,fill,inner sep=1.5pt]{};
\end{tikzpicture}
\caption{Directed graph determined by the \textit{Kleene Star} for Example~\ref{ex:poly_hyper}.}\label{fig:max_kleene}
\end{figure}
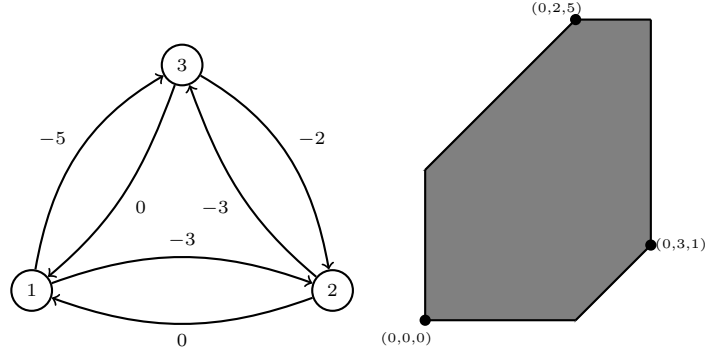

\subsubsection{Maximum Inscribed Balls for Tropical Polytopes}\label{sec:max_balls}

We now illustrate how to use~\eqref{eq:maxhyperp} to formulate a linear program to construct the maximum inscribed ball for a tropical simplex $P_\Delta$ denoted as $B_R(P_\Delta)$.  Like any polytrope, a tropical ball, $B_l(x_0)\in \mathbb{R}^e/\mathbb{R}\mathbf{1}$ can be defined as a tropical simplex with a minimum vertex set $V'$ such that $B_l(x_0)=\tconv(V')$.  

Given a point, $x_0:=(x_0^1,\ldots, x_0^e)$, representing the center of a $B_l(x_0)$, we can obtain vertices in $V'$ for $B_l(x_0)$ such that $B_l(x_0)=\tconv(V')$ where 


\[V'=\left\{\begin{array}{ccc}
    (x_0^1-l,x_0^2-l,x_0^3-l\ldots,x_0^e-l),\\(x_0^1+l,x_0^2,x_0^3,\ldots,x_0^e),\\
    \vdots\\
    (x_0^1,x_0^2,x_0^3,\ldots,x_0^e+l)
\end{array}\right\}.\]

\begin{example}
Let $x_0\in \mathbb{R}^3/\mathbb{R}\mathbf{1}$ represent the center of a tropical ball, $B_l(x_0)$ with radius, $l$ where $x_0:=(0,x_0^1,x_0^2)$.  Because $B_l(x_0)$ is a polytrope, the minimum vertex set, $V'$, consists of three vertices, where $B_l(x_0)=\tconv(V')$.  Each $v^i\in V'$ where $i\in[3]$, can be represented in terms of $x_0$ and $l$.  Specifically,
\begin{align*}
    v^1 &= (0, x_0^1-l,x_0^2-l)\\
    v^2 &= (0, x_0^1,x_0^2+l)\\
    v^3 &= (0, x_0^1+l,x_0^2)
\end{align*}

\noindent Figure~\ref{fig:Ball_ag} illustrates the coordinate construction of  $B_l(x_0) \in \mathbb{R}^3/\mathbb{R}\mathbf{1}$.  

\begin{figure}[H]
\centering
\begin{tikzpicture}[node distance={30mm}, thick, main/.style = {draw, circle}]
\draw [-](1,1) to (1,3);
\draw [-](1,1) to (3,1);
\draw [-](1,3) to (3,5);
\draw [-](3,1) to (5,3);
\draw [-](5,5) to (3,5);
\draw [-](5,5) to (5,3);
\node at (1,1) [circle,fill,gray,inner sep=1.5pt]{};
\node at (5,3)[circle,fill,gray,inner sep=1.5pt]{};
\node at (3,5)[circle,fill,gray,inner sep=1.5pt]{};
\node at (3,3)[circle,fill,black,inner sep=1.5pt]{};
\node[] at (.25,1) {\tiny$\left(0,x_0^1-l,x_0^2-l\right)$};
\node[] at (2.35,5) {\tiny$\left(0,x_0^1,x_0^2+l\right)$}; 
\node[]  at (5.6,3) {\tiny$\left(0,x_0^1+l,x_0^2\right)$};
\node[]  at (3,3.3) {\tiny$\left(0,x_0^1,x_0^2\right)$};
\end{tikzpicture}
\caption{Tropical ball, $B_l(x_0)\in \mathbb{R}^3/\mathbb{R}\mathbf{1}$ with radius $l$.}
\label{fig:Ball_ag}
\end{figure}
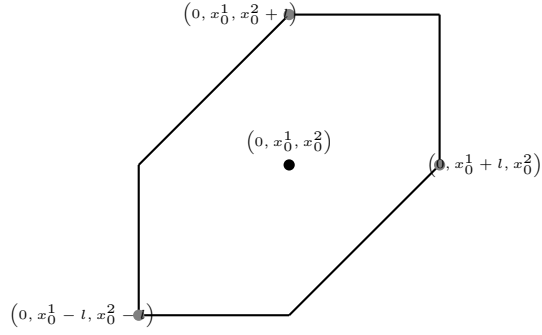
\end{example}

Using the vertices in $V'$, we can compute $B_R(P_\Delta)$. To determine if $B_l(x_0)$ falls inside $P_\Delta$, it suffices to check if the vertices in $V'$ defining $B_l(x_0)$ are in the $P_\Delta$.  Therefore, to find $B_R(P_\Delta)$, we check that each vertex defining $B_R(P_\Delta)$ is inside of the tropical polytope $P_\Delta$ with the center point, $x_0$ 
and the maximum radius, $R$. The linear program shown in~\eqref{eq:maxball}-\eqref{eq:maxball4} finds $B_R(P_\Delta)$ by constructing the $h^*-representation(B_R(P_\Delta))$ in terms of its center, $x_0$, as shown in Figure~\ref{fig:Ball_ag}.

\begin{align}\label{eq:maxball}
    &\text{$\max_{R,x_0}$} \quad
    R \\\label{eq:maxball1}
    &\;\text{s.t.} \quad
\;(x_0^i+R)-x_1\le -m_{1i}& \quad
    \forall\;i\in\{2,\dots,e\}\\\label{eq:maxball2}
    &\;x_1-(x_0^j-R)\le -m_{j1}& \quad
    \forall\;j\in\{2,\ldots, e\}\\\label{eq:maxball3}
    &\;(x_0^j+R)-x_0^i \leq -m_{ij}&\quad
    \forall\;i,j\in\{2,\ldots, e\},\;i\neq j\\\label{eq:maxball4}
    &\;x_0^1=0.
\end{align}

\noindent To check if a vertex is inside $P_\Delta$, we leverage (\ref{eq:maxhyperp}).
Because the first coordinate is special (i.e., $x_1=0$), we separately consider the constraint in (\ref{eq:maxhyperp}) for $j=1$, $i=1$ or the other cases.
Note that inequalities are mostly redundant for the vertices in $V'$ defining $B_R(P_\Delta)$.
Therefore only the non-redundant inequality is kept finally in the formulation above.

Notably, the $B_R(P_\Delta)$ need not to be unique.  Results from the following example show that $B_R(P_\Delta)$ can have a center point, $x_0$, from $(0,1.5,1.5)$ to $(0,1.5,3)$ and achieve the desired result.

\begin{example}\label{ex:min_insc1}
    We again consider the tropical simplex, $P_\Delta$, defined in Example~\ref{ex:poly_hyper}.  The formulation to find $B_R(P_\Delta)$ is

    \begin{align*}
    &\text{$\max_{R,x_0}$} \quad
    R \\
    &\;\text{s.t.} \quad
\;x_0^1-(x_0^2-R)\le 0 \\
&\;x_0^1-(x_0^3-R)\le 0\\
    &\;(x_0^2+R)-x_0^1\le 3\\ 
    &\;(x_0^3+R)-x_0^1\le 5\\ 
    &\;(x_0^3+R)-x_2 \leq 3\\ 
    &\;(x_0^2+R)-x_0^3 \leq 2\\ 
    &\;x_0^1=0.
\end{align*}

\noindent This results in $x_0=(0,1.5,1.5)$ and radius, $R=1.5$.    Figure~\ref{fig:Ball_ins1} shows $B_R(P)$.
\end{example}

\begin{figure}[H]
    \centering
    \includegraphics[width=0.65\textwidth]{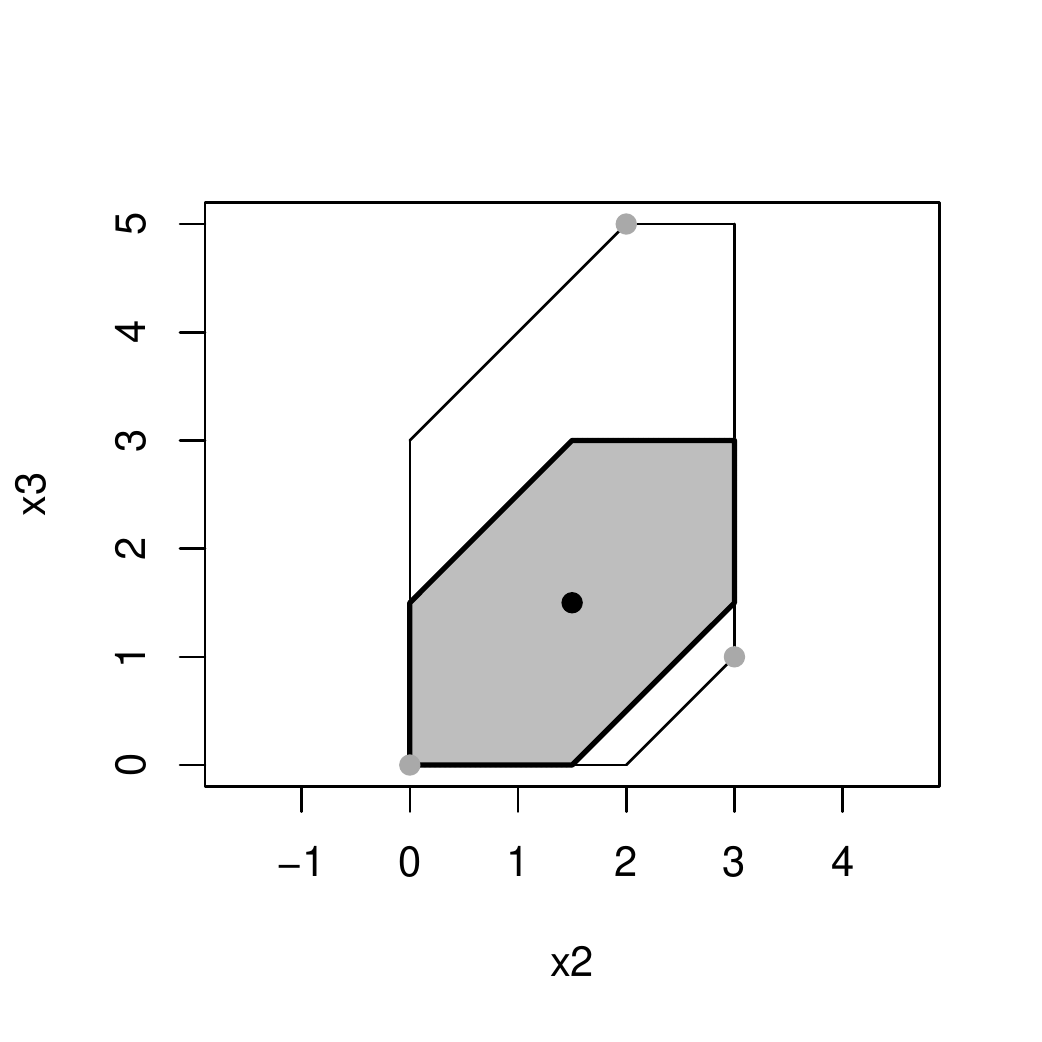}
    \caption{Maximum inscribed ball from Example~\ref{ex:min_insc1}.  The tropical ball has a radius, $R=1.5$ with $B_R(P_\Delta)=\{(0,0,0),(0,1.5,3),(0,3,1.5)\}$ and $x_0=(0,1.5,1.5)$ shown in black.  The points defining $P$ are shown in gray.  Note that the lower left point for $P$ and $B_R(P_\Delta)$ are coincident.}
    \label{fig:Ball_ins1}
\end{figure}

\subsubsection{Maximum Inscribed Balls for General Tropical Polytopes}

In the previous section, we focus on computing maximum inscribed balls for tropical simplices.  In this section, we show that we can apply the same process and formulation shown in~\eqref{eq:maxball}-~\eqref{eq:maxball4} to any tropical polytopes $P$ to find $B_R(P)$. To do this we focus on the set of tropical simplices in the {\em tropical simplicial complex} of $P$, $\Delta_P$,  determined by the $t=\binom{n}{e}$ combinations of vertices in the minimum vertex set of $P$, $V'$ with $|V'|=n$~\citep{Gallart2021TropicalBA}. 

\begin{definition}[Tropical Simplicial Complex (See~\citep{Gallart2021TropicalBA})]\label{def:trop_simp_comp}
    Let $V$ be a set of points such that $\tconv(V)=P\in \mathbb{R}^e/\mathbb{R}\mathbf{1}$, the collection of tropical polytopes defined by all subsets $V'\subseteq V$ where $|V'|\leq e$ and a tropical polytope $P'=\tconv(V')$, is known as the tropical simplicial complex, denoted as $\Delta_P$. We say $\Delta_P$ is {\em pure} if all facets in $\Delta_P$ has the same cardinality.
\end{definition}

Only tropical polytopes of $e$ or more vertices may contain a $Tr_{(e-1)}(P)$.  Therefore we apply~\eqref{eq:maxball}-\eqref{eq:maxball4} to each tropical simplex in $\Delta_P$, denoted $P_\Delta^i \in \Delta_P$, to construct $B_{R_i}(P_\Delta^i)$ where $R_i$ is the radius of the maximum inscribed ball for $P_\Delta^i$.  Since $\bigcup_{i=1}^{t}P_\Delta^i=P$, we have $\bigcup_{i=1}^{t}Tr_{e-1}(P_\Delta^i)=Tr_{e-1}(P)$.  Therefore,
$B_R(P)=B_{R_i}(P_i)$ for the $P_i \in \Delta_P$ where $R_i$ is the largest.






\begin{example}[Borrowed from~\citep{TVOL_2}]\label{ex:nc_tpoly}
    Consider a tropical polytope $P:=\tconv(\{(0,-2,5),(0,-2,3),$\\$(0,2,2),(0,1,0)\})$.  Figure~\ref{fig:tpolex} illustrates this tropical polytope.

\begin{figure}[H]
    \centering
    \includegraphics[width=0.65\textwidth]{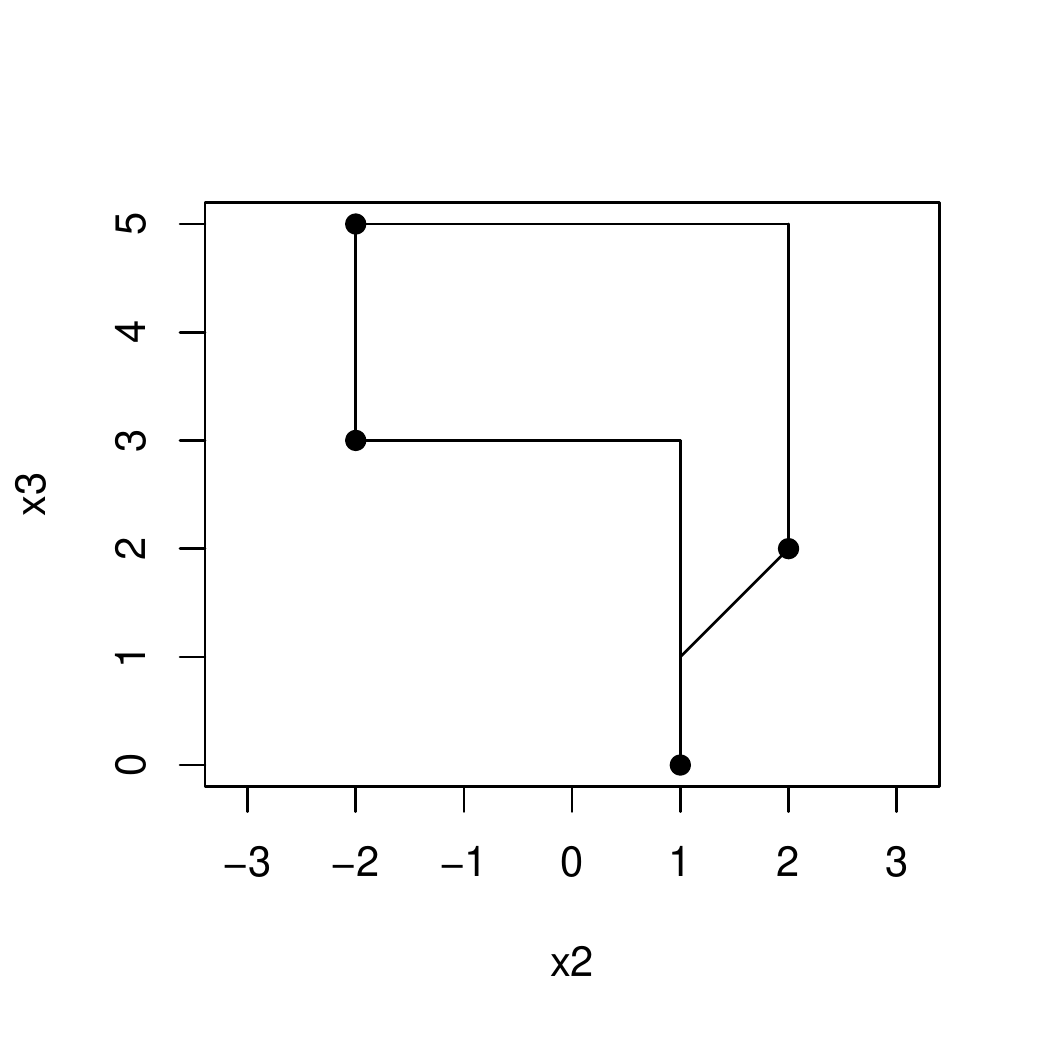}
    \caption{Tropical polytope used in Example~\ref{ex:nc_tpoly}.  The tropical convex hull is defined by the four vertices in black~\citep{TVOL_2}.}
    \label{fig:tpolex}
\end{figure}

\noindent There are four tropical simplices in $\Delta_P$,  so to compute $B_R(P)$ we must apply~\eqref{eq:maxball}-\eqref{eq:maxball4} to evaluate each one.  Figure~\ref{fig:tpolex1} shows the four tropical simplices.

\begin{figure}[H]
 \centering
 \includegraphics[width=0.45\textwidth]{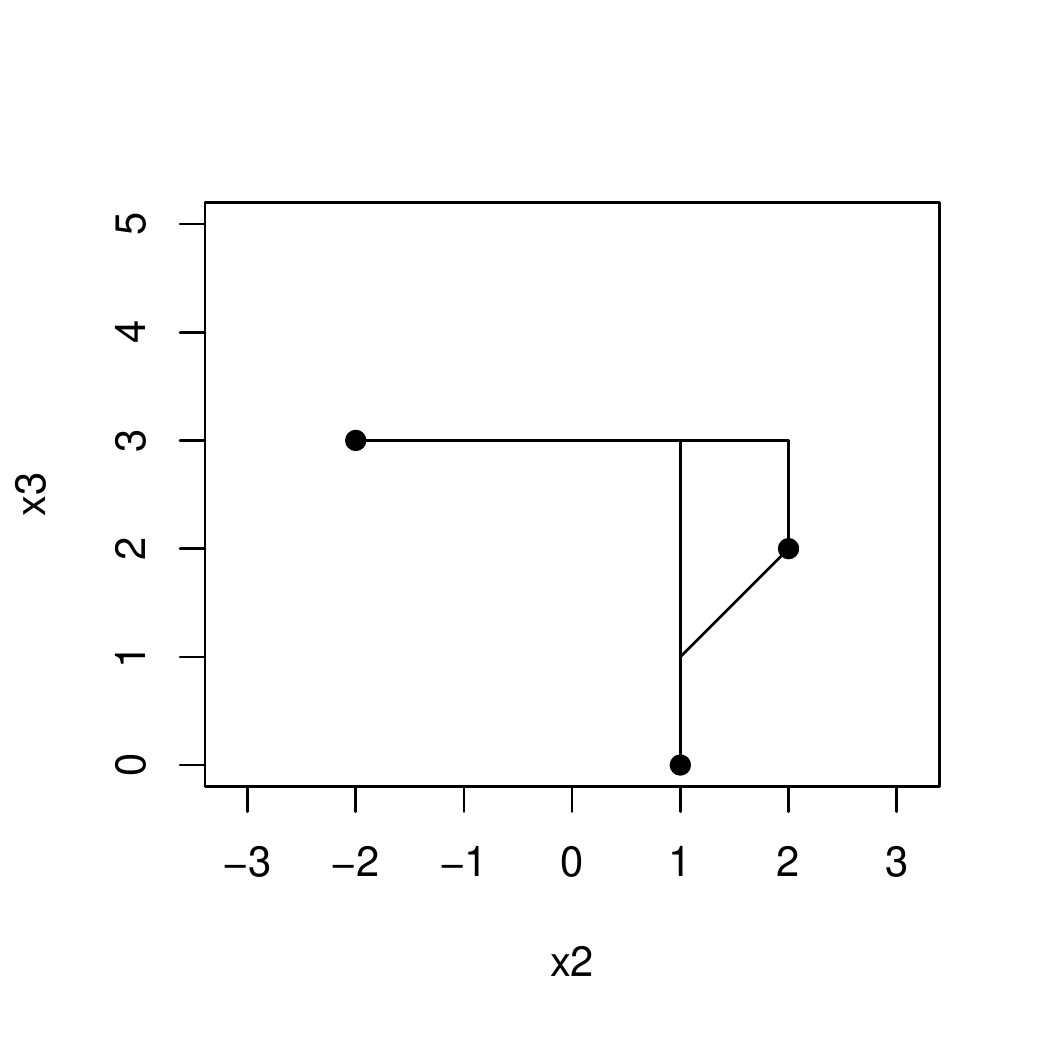}~
 \includegraphics[width=0.45\textwidth]{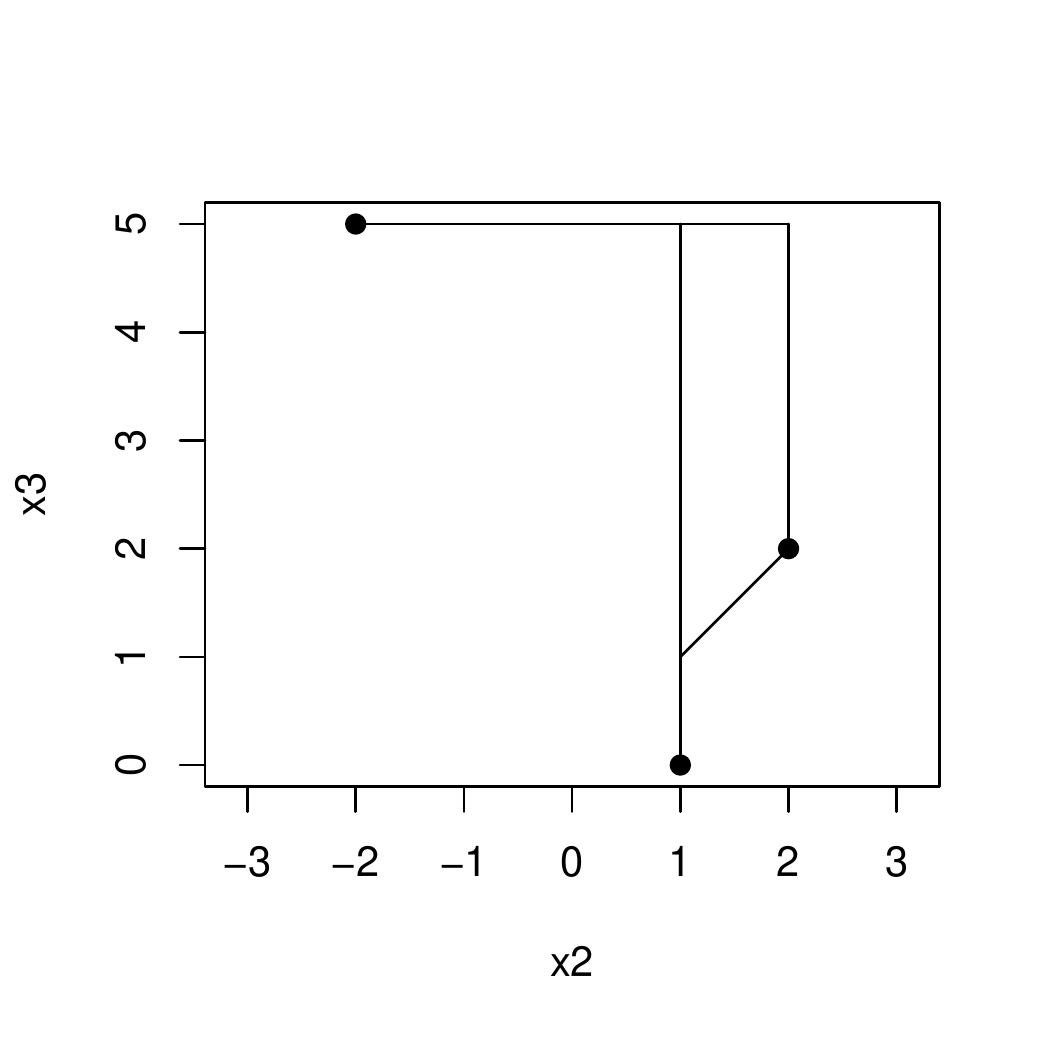}
 \includegraphics[width=0.45\textwidth]{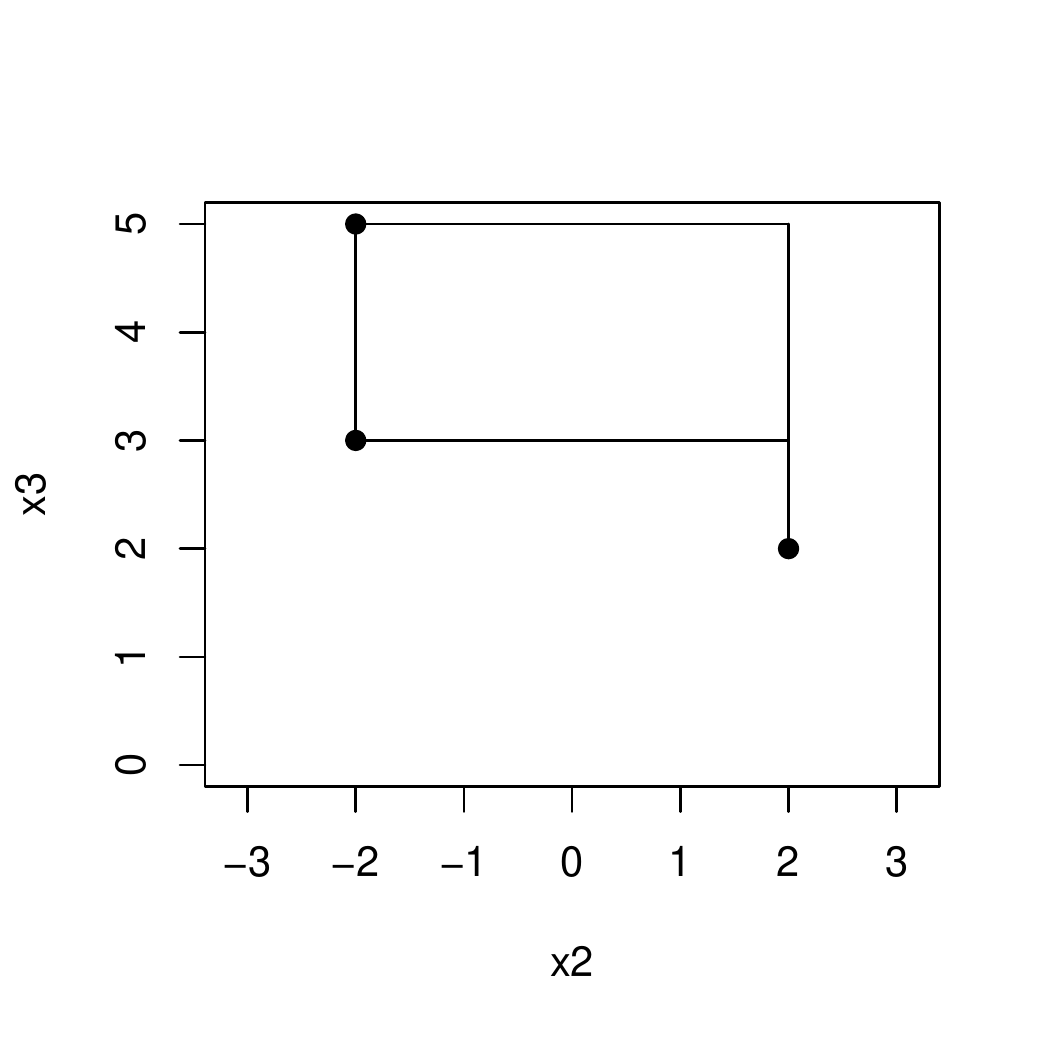}~
 \includegraphics[width=0.45\textwidth]{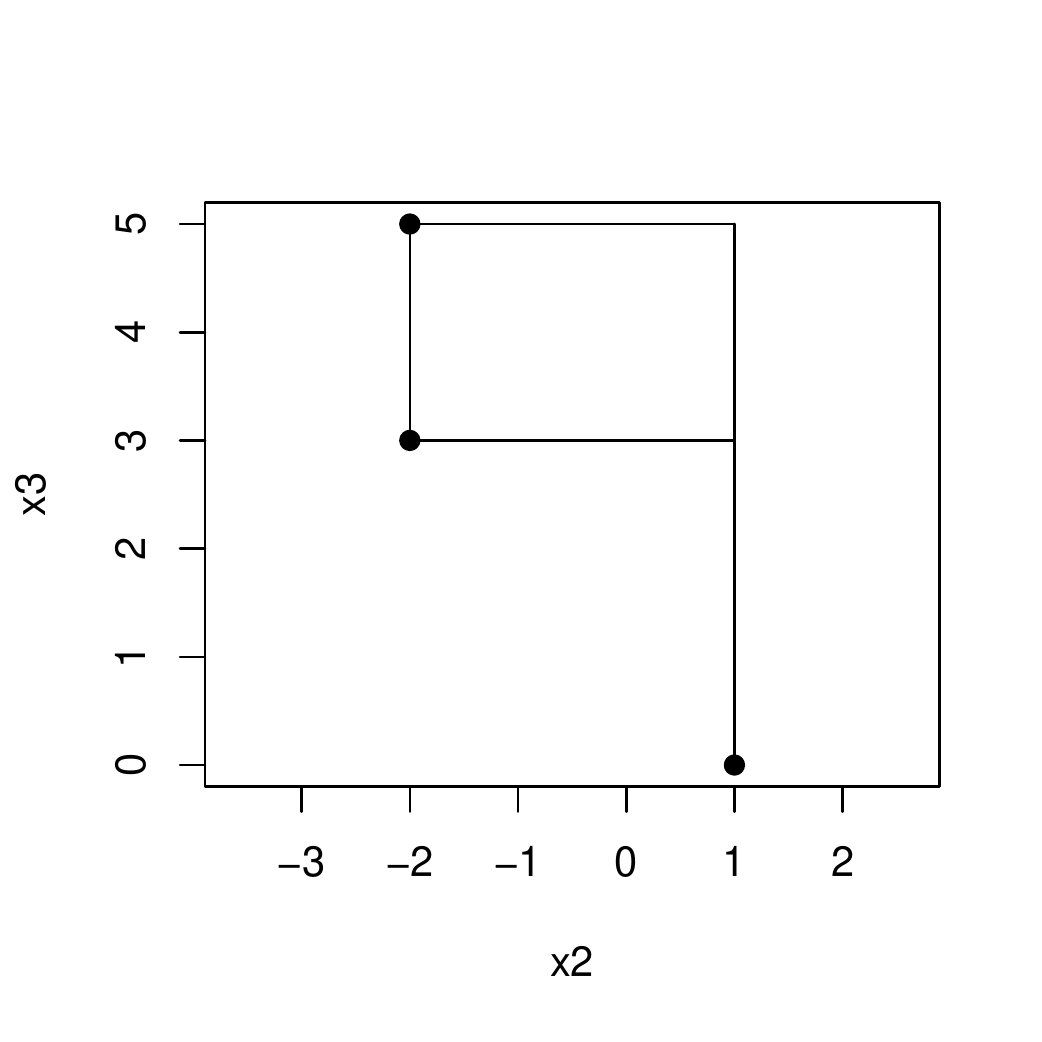}

\caption{The tropical simplices defining the tropical polytope, $P$ for Example~\ref{ex:nc_tpoly}.  $P$ is defined on four vertices so $P$ can be defined by the union of four tropical simplices.}
\label{fig:tpolex1}
\end{figure}

\noindent In Figure~\ref{fig:tpolex1}, the bottom two tropical simplices will yield the $B_R(P)$.  
For brevity we show the $h*-representation(P)$ for the bottom right tropical simplex and linear program formulation using \eqref{eq:maxball}-\eqref{eq:maxball4}.

\begin{align*}
    &\text{$\max_{R,x_0}$} \quad
    R \\
    &\;\text{s.t.} \quad
\;x_0^1-(x_0^2-R)\le 2 \\
&\;x_0^1-(x_0^3-R)\le -3\\
    &\;(x_0^2+R)-x_0^1\le 1\\
    &\;(x_0^3+R)-x_0^1\le 5\\
    &\;(x_0^3+R)-x_0^2 \leq 7\\
    &\;(x_2+R)-x_0^3 \leq 1\\
    &\;x_0^1=0.
\end{align*}
 
\noindent The $B_R(P)$ center point and radius are $x_0=(0,-1,4)$ and $R=1$, respectively.
\begin{figure}[H]
    \centering
    \includegraphics[width=0.44\textwidth]{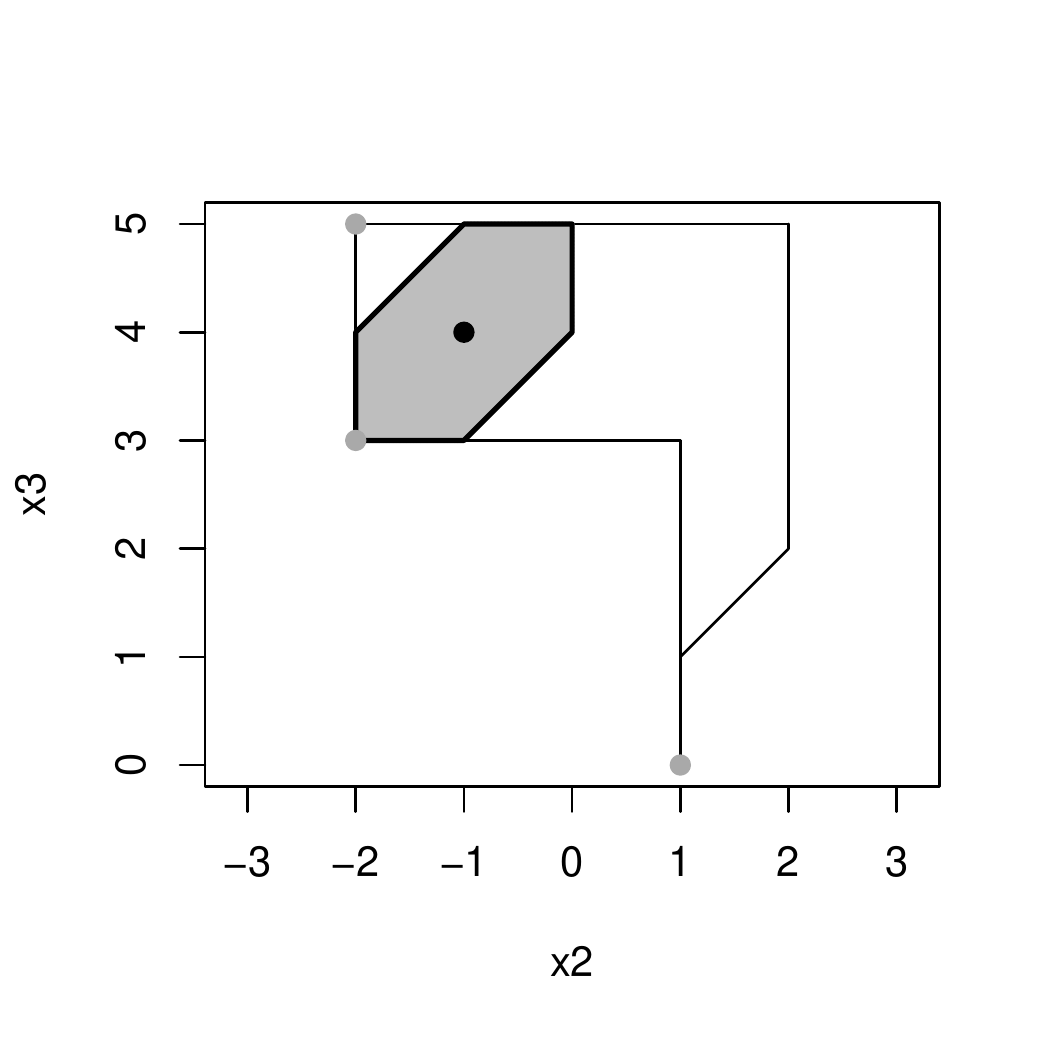}
    \caption{Tropical polytope used in Example~\ref{ex:nc_tpoly} with $B_R(P)$ defined in gray.  The center point, $x_0$ is shown in black.}
    \label{fig:tpolex_max_ins}
\end{figure}
\end{example}


This final example illustrates the use of the formulation in \eqref{eq:maxball}-\eqref{eq:maxball4} of a tropical simplex which is not a polytrope.

\begin{example}\label{ex:ncpoly2}
    Consider a tropical simplex $P_\Delta=\{(0,0,0,0),(0,1,3,1),(0,1,2,5),(0,2,5,10)\}$.  Let $A$ be a matrix representing the points as column vectors.  We calculate $tdet(A)=14$, and order the columns such that $A_{\sigma(i),i}=a'_{kk}$ where $\sigma(i)=k$. 

    \[A=\left(\begin{array}{cccc}
        0 & 0 &0&0\\
         0& 1&1&2\\
         0&3&2&5\\
         0&1&5&10
    \end{array}\right)\rightarrow 
    \left(\begin{array}{cccc}
        0 & 0 &0&0\\
         0& 1&1&2\\
         0&2&3&5\\
         0&5&1&10
     \end{array}\right)=A'.
    \]

\noindent After constructing $A'$ we add $-A'_{kk}$ to each element of each column $A_k$.

\[A'=\left(\begin{array}{cccc}
        0 & 0 &0&0\\
         0& 1&1&2\\
         0&2&3&5\\
         0&5&1&10
    \end{array}\right)\rightarrow 
    \left(\begin{array}{cccc}
        0 & -1 &-3&-10\\
         0& 0&-2&-8\\
         0&1&0&-5\\
         0&4&-2&0
     \end{array}\right)=\mathbf{m^*}.
    \]
\noindent This results in the following program:

\begin{align*}
    &\text{$\max_{R,x_0}$} \quad
    R \\
    &\;\text{s.t.} \quad
\;x_0^1-(x_0^2-R)\le 0 \\
&\;x_0^1-(x_0^3-R)\le 0\\
&\;x_0^1-(x_0^4-R)\le 0\\
    &\;(x_0^2+R)-x_0^1\le 1\\
    &\;(x_0^3+R)-x_0^1\le 3\\
    &\;(x_0^4+R)-x_0^1\le 10\\
    &\;(x_0^3+R)-x_0^2 \leq 2\\
    &\;(x_0^4+R)-x_0^2 \leq 8\\
    &\;(x_0^2+R)-x_0^3 \leq -1\\
    &\;(x_0^4+R)-x_0^3 \leq 5\\
    &\;(x_0^2+R)-x_0^4 \leq -4\\
    &\;(x_0^3+R)-x_0^4 \leq 2\\
    &\;x_0^1=0.
\end{align*}

\noindent Solving the above linear program results in a center point, $x_0=(0,.5,2,.5)$ with the radius, $R=0.5$.  Figure~\ref{fig:tpolex2} shows $P_\Delta$ and $B_R(P_\Delta)$.

\begin{figure}[H]
 \centering
 \includegraphics[width=0.4\textwidth]{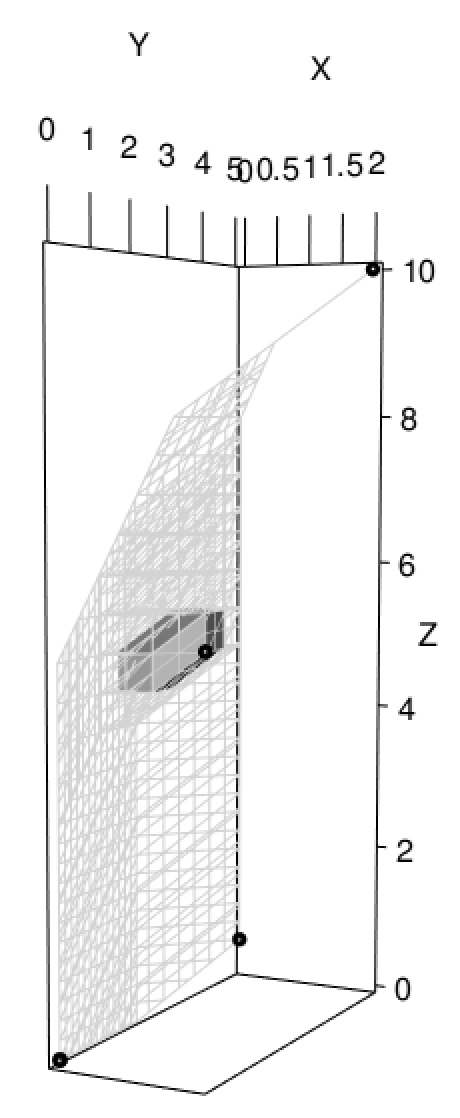} ~
\includegraphics[width=0.38\textwidth]{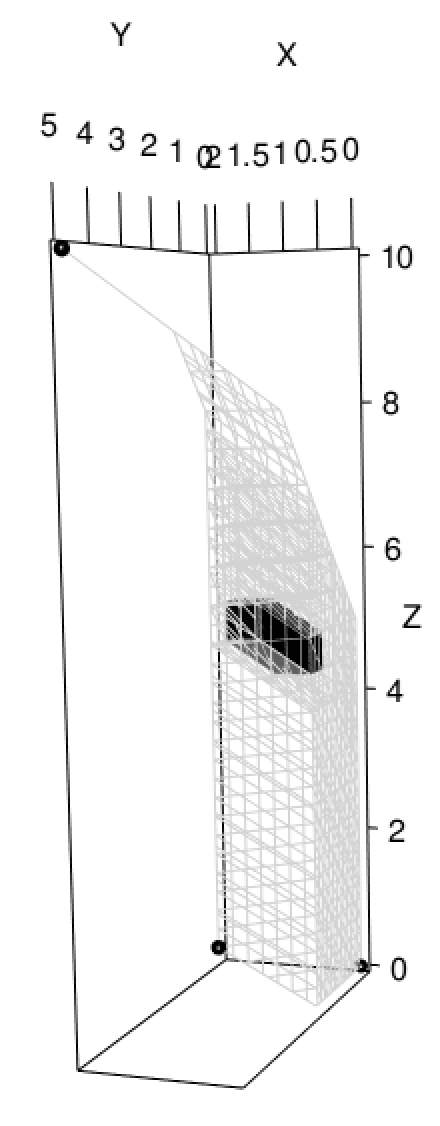}
\caption{$B_R(P_\Delta)$ constructed in Example~\ref{ex:ncpoly2}.}
\label{fig:tpolex2}
\end{figure}
\end{example}

\subsection{Minimum Enclosing Tropical Balls}

Next we illustrate how we can formulate a linear programming problem to find the minimum enclosing ball, $B_r(P)$, for a tropical polytope $P$.  We seek $y_0\in \mathbb{R}^e/\mathbb {R}\mathbf{1}$ to be the center point of $B_r(P)$ which minimizes $d_{tr}(y_0,v_i)$ for each $v_i\in V$ where $P=\tconv(V)$, such that $P\subset B_r(P)$.  The largest minimized $d_{tr}(y_0,v_i)$ for all $v_i \in V$ is the radius, $r$, of $B_r(P)$.  Com\u{a}neci et al. showed in~\citep{transport}, that for any two points, $x,y \in \mathbb{R}^e/\mathbb{R}\mathbf{1}$, $d_{tr}(x,y)$ can be rewritten in the following way:

\begin{equation}\label{eq:maxhyperp2}
    d_{tr}(x,y)=\text{$\max_i$}(x-y)-\text{$\min_j$}(x-y)=\text{$\max_{\substack{i,j\\i\neq j}}$}(x_i-y_i-x_j+y_j).
\end{equation}

\noindent We note that this is equivalent to the following formulation shown in~\citep{joswigBook},

\begin{equation}\label{eq:maxhyperp3}
    d_{tr}(x,y)=\text{$\max_{1\leq i<j\leq e}$}(|x_i-y_i-x_j+y_j|).
\end{equation}

Using the result from~\eqref{eq:maxhyperp2}, we can formulate the following optimization problem for a given $V$ to find $y_0$ and $r$ of $B_r(P)$.

\begin{align}\label{eq:minball1}
    \text{$\min_{y_0}$  }& \text{$\max_{\substack{i\in[|V|] \\j,k\in[e]\\j\ne k}}\,$}\left(v_{ij}-y_0^j-v_{ik}+y_0^k\right)\\
    &\text{s.t. } v_i\in V.
\end{align}

Because we are minimizing a maximum of a set of linear functions, the objective function in~\eqref{eq:minball1} may be further manipulated to formulate a linear program to solve for $y_0$ by replacing the maximum function with the variable $r$, representing the radius of $B_r(P)$. This formulation is shown in~\eqref{eq:minball}-\eqref{eq:minball3} as

\begin{align}
    \text{$\min_{r,y_0}$} \quad
    r& \label{eq:minball} \\ 
    \text{s.t.} \quad 
    &v_{i,j}-y_0^j-v_{i,k}+y_0^k\le r \quad \label{eq:minball2} \\
    &\indent \forall\;i\in[|V|];\,j,k\in\;[e],\;j\neq k, \nonumber \\
    &r \geq 0. \label{eq:minball3}
\end{align}

\noindent Importantly, $y_0$ need not be in $P$ to construct $B_r(P)$.  Because~\eqref{eq:maxhyperp2} is equivalent to~\eqref{eq:maxhyperp3}, either will result in the similar constraint construction (See Proposition 25 in~\citep{LSTY}).
To give a lower bound on the radius of $B_r(P)$, we arrive at the following proposition.

\begin{proposition}\label{thm:ext}
For a minimum enclosing tropical ball of a tropical polytope $P$,  $B_r(P)$ with the radius $r$ satisfies the following inequality:
\begin{equation}\label{eq:t_ballmin1}
    r \geq \max \limits_{i,j} \frac{d_{tr}(v_i,v_j)}{2},
\end{equation}
where $v_i,v_j \in V$ with $V$ being the vertex set of $P$.
\end{proposition}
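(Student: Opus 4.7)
The plan is to argue that any ball containing $P$ must contain all the vertices $v_i \in V$, and then to combine this with the triangle inequality for the tropical metric $d_{\rm tr}$. Since the tropical metric is a genuine metric on $\mathbb{R}^e/\mathbb{R}\mathbf{1}$ (this is a standard fact, see e.g. the discussion of $d_{\rm tr}$ in \cite{joswigBook}), the usual argument for the analogous statement in Euclidean space transfers directly.

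First, I would observe that by Definition \ref{df:tropicalball}, $B_r(P) = \{y : d_{\rm tr}(y_0, y) \leq r\}$ where $y_0$ is the center of the minimum enclosing tropical ball. Since $B_r(P) \supseteq P \supseteq V$, every vertex $v_i \in V$ satisfies $d_{\rm tr}(y_0, v_i) \leq r$. Next, for any pair $v_i, v_j \in V$, the triangle inequality gives
\begin{equation*}
d_{\rm tr}(v_i, v_j) \leq d_{\rm tr}(v_i, y_0) + d_{\rm tr}(y_0, v_j) \leq r + r = 2r.
\end{equation*}
Taking the maximum over all pairs yields $\max_{i,j} d_{\rm tr}(v_i,v_j) \leq 2r$, which is precisely \eqref{eq:t_ballmin1} after dividing by two.

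There is essentially no obstacle here: the only thing to be careful about is ensuring that the triangle inequality for $d_{\rm tr}$ is invoked correctly. One could verify it directly from Definition \ref{eq:tropmetric} by writing $v_i - v_j = (v_i - y_0) + (y_0 - v_j)$ coordinatewise and noting that $\max$ minus $\min$ is subadditive under sums, but since $d_{\rm tr}$ is a well-known metric this reduces to a citation. The proof is short and primarily serves to establish a useful a priori lower bound on $r$ that may be leveraged later (for instance, when bounding acceptance rates of the HAR sampler discussed in the introduction).
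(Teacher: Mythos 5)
Your proof is correct, and it takes a genuinely different route from the paper's. You work directly with the center $y_0$ of $B_r(P)$: every vertex of $P$ lies in the ball so is within tropical distance $r$ of $y_0$, and the triangle inequality for $d_{\rm tr}$ (which does hold on $\mathbb{R}^e/\mathbb{R}\mathbf{1}$, by the coordinatewise subadditivity argument you sketch) immediately gives $d_{\rm tr}(v_i, v_j) \leq 2r$. The paper instead compares vertex sets: it invokes the fact that the tropical diameter of a tropical polytope is attained among its vertices, observes that all pairwise distances between the $e$ vertices of a tropical ball of radius $r$ equal exactly $2r$, and then chains these through $P \subseteq B_r(P)$. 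Your argument is more elementary and self-contained (it needs only the metric axioms and the definition of the ball, and would apply verbatim to any metric), whereas the paper's version leans on the ``diameter is achieved at vertices'' lemma and the specific combinatorial structure of a tropical ball as a regular simplex; the paper's phrasing also writes a $\min$ over ball-vertex pairs where, strictly read, the chain of inequalities only delivers the corresponding $\max$ (harmless here since all those pairwise distances coincide, but it makes your version the cleaner of the two).
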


\begin{proof}

Let $V'$ be a minimum vertex set for a tropical polytope $P$, where $P=\tconv(V')$ and let $W'$ be the minimum vertex set where $B_r(P)=\tconv(W')$. We are interested in finding the minimum enclosing ball, $B_r(P)$.  Recall that for any $x,y \in P$ 

\[d_{tr}(x,y)\leq \max_{i,j}d_{tr}(v_i,v_j)\] 

\noindent where $v_i,v_j \in V'$ and $i,j\in[|V'|]$.   Therefore, 

\[\max_{i,j}d_{tr}(v_i,v_j)\leq \min_{k,l}d_{tr}(w_k,w_l),\] 

\noindent where $w_k,w_l \in W'$ and $k,l\in[|W'|]$.  Considering $B_r(P)$ and $W'$,   
\[d_{tr}(w_k,w_l)=2r\;\;\forall\; k,l\in[|W'|],\]
\noindent by definition of a tropical ball where $r$ is the radius of the the tropical ball.  Therefore, since $B_r(P)$ encompasses $P$,
\[\max_{i,j}\frac{d_{tr}(v_i,v_j)}{2}\leq r.\]
\end{proof}
   
\begin{example}\label{ex:ex1}
Consider the tropical polytope, $P$, generated by $V=\{(0, 0, 0), \, (0, 1, 0), \,$\\$ (0, 0, 1)\}$ in $\mathbb R^3 \!/\mathbb R {\bf 1}$ . The maximum pairwise tropical distance between the vertices is \\$d_{tr}((0,1,0),(0,0,1))=2$.  Therefore, $r\geq1$  (Figure \ref{fig:2D_Ball} (left)).  
\end{example}

\begin{example}\label{ex:ex3}
Consider the tropical polytope, $P$, generated by $V=\{(0, 0, 0), \, (0, 2, 5), \,$\\$ (0, 3, 1)\}$ in $\mathbb R^3 \!/\mathbb R {\bf 1}$ . The maximum pairwise tropical distance between the vertices is $d_{tr}((0,0,0),$\\$(0,2,5))=d_{tr}((0,3,1),(0,2,5))=5$.  Therefore $r\geq2.5$  (Figure \ref{fig:2D_Ball} (right)).  
\end{example}
\begin{example}\label{ex:ex2}
Consider the tropical polytope, $P$, generated by $V=\{(0, -2, 3), \, (0, -2, 5), \,$\\$ (0, 2, 2), \, (0,1,1)\}$ in $\mathbb R^3 \!/\mathbb R {\bf 1}$ . The maximum pairwise tropical distance between the vertices is $d_{tr}((0,-2,5),(0,1,0))=8$.  Therefore $r\geq4$  (Figure \ref{fig:2D_Ball} (bottom)).  
\end{example}

\begin{figure}[H]
    \centering
    \includegraphics[width=0.5\textwidth]{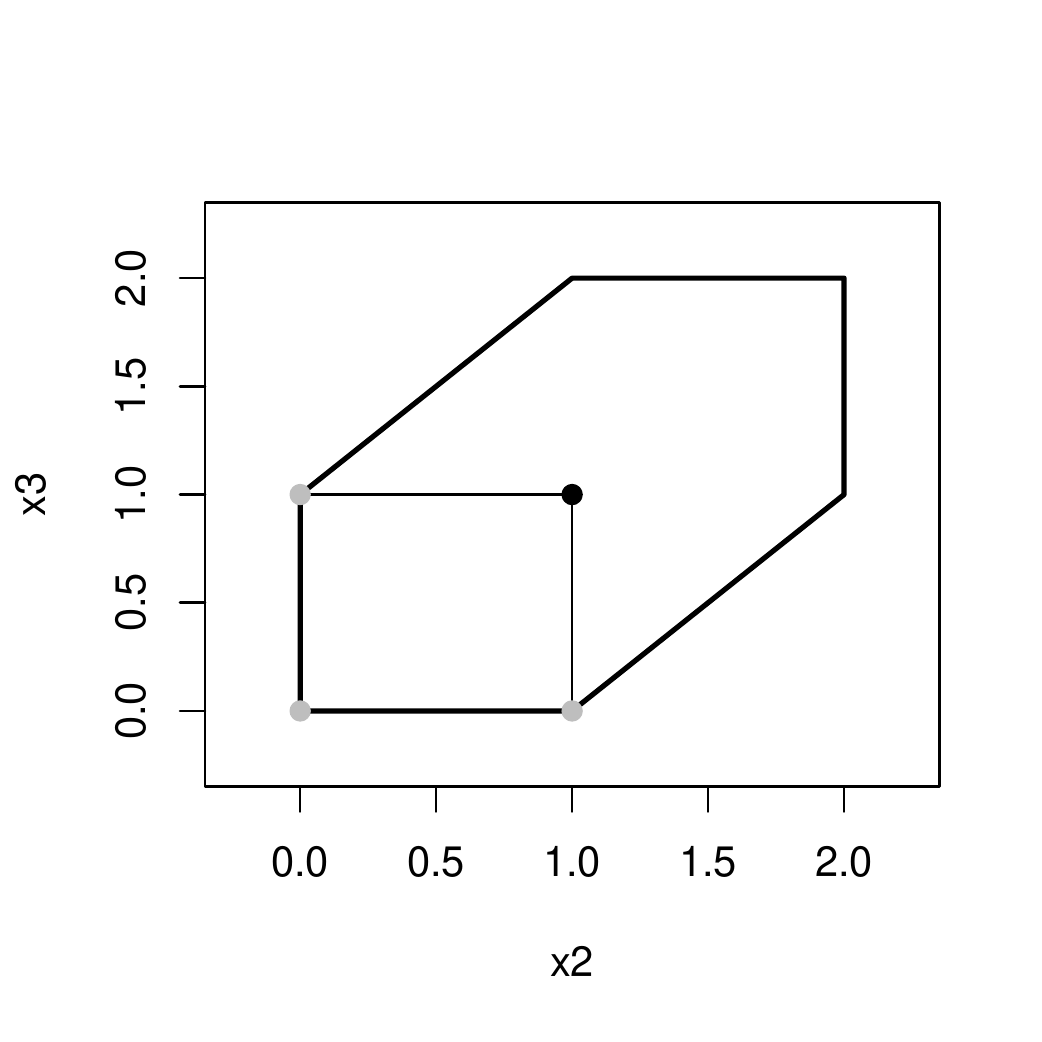}~
    \includegraphics[width=0.5\textwidth]{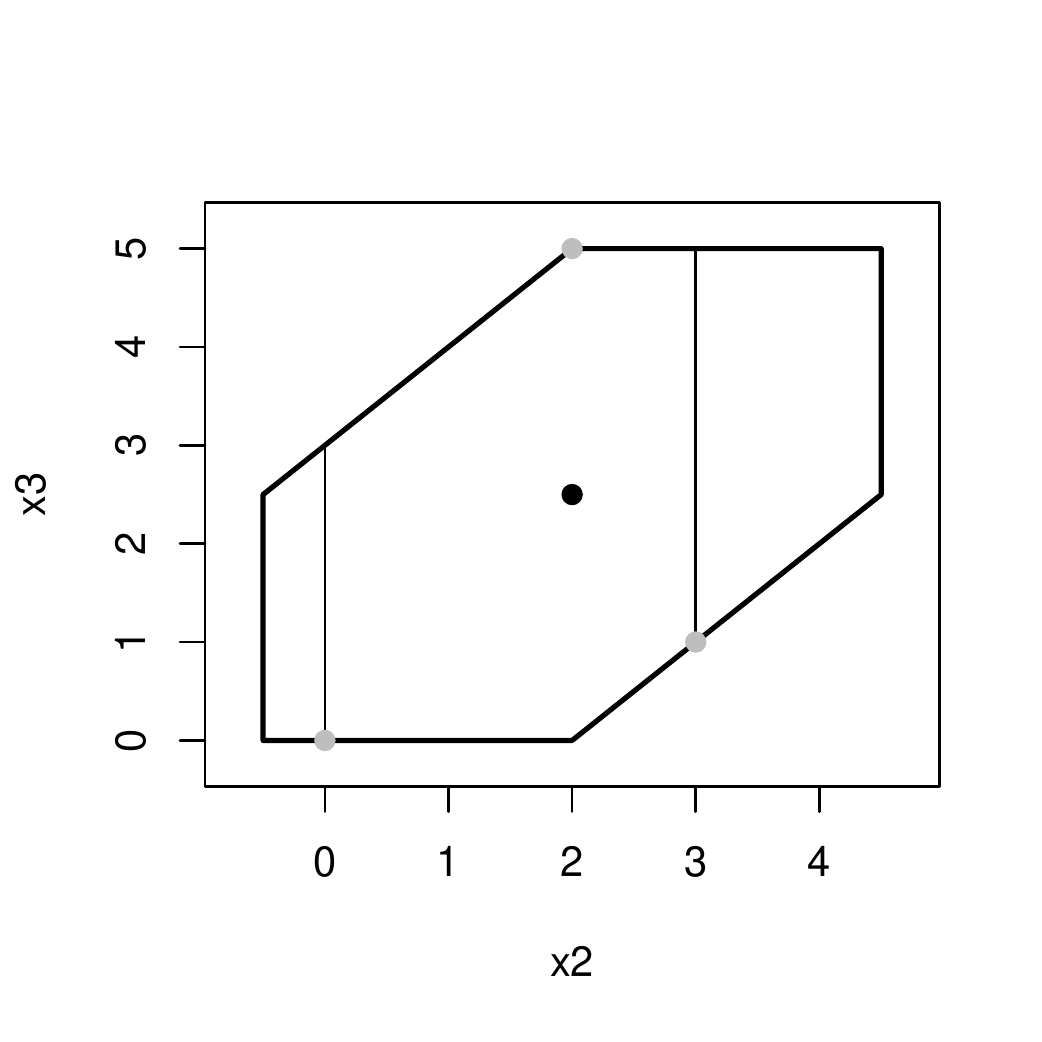}
     \includegraphics[width=0.5\textwidth]{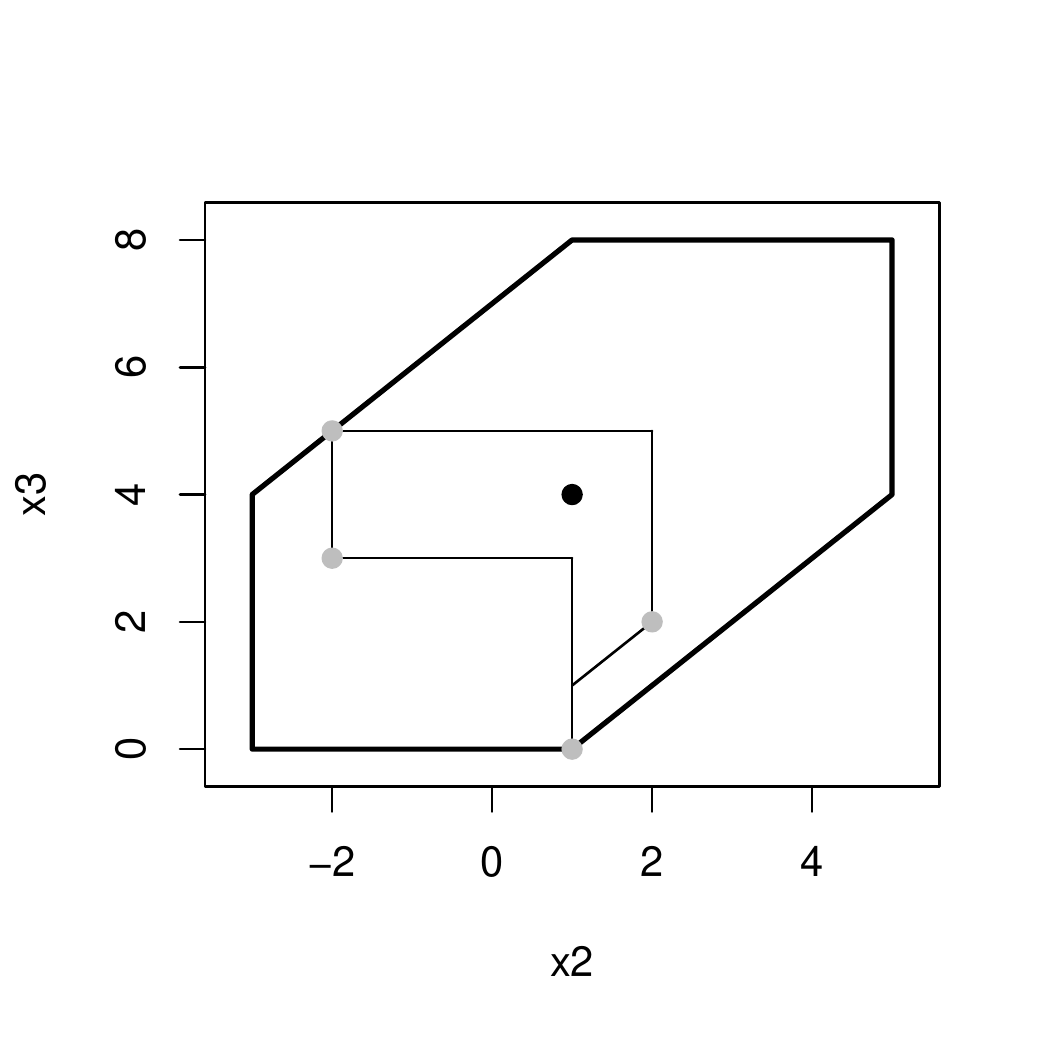}
    \caption{Results for Example~\ref{ex:ex1},~\ref{ex:ex2}, and~\ref{ex:ex3}.  Tropical polytopes are shown with the defining vertices in gray and associated $B_r(P)$ represented by the black lines.  Black points represent the center of the $B_r(P)$.}
    \label{fig:2D_Ball}
\end{figure}
\newpage
Employing~\eqref{eq:minball}-\eqref{eq:minball3} in each of the previous examples shows that equality holds when using the result from Proposition~\ref{thm:ext}. To show that equality does not always hold in~\eqref{eq:t_ballmin1}, we provide the following example of a tropical polytope in $\mathbb R^3 \!/\mathbb R {\bf 1}$.

\begin{example}\label{ex:ex4}
Consider the tropical polytope, $P$, generated by four vertices $(0, 0, 0,0), \, (0, 2, 5,0), \,$\\$ (0, 3, 1,0),$ $ \, (0,2,5,5)$ in $\mathbb R^4 \!/\mathbb R {\bf 1}$ . The maximum pairwise tropical distance between the vertices is $d_{tr}((0,3,1,0),(0,2,5,5))=6$.  However, solving~\eqref{eq:minball}-(15) yields $r=3.333$ with $y_0=(0,0,2,3.333,1.667)$  (Figure \ref{fig:3D_Ball}).  
\end{example}

\begin{figure}[H]
    \centering
    \includegraphics[width=0.44\textwidth]{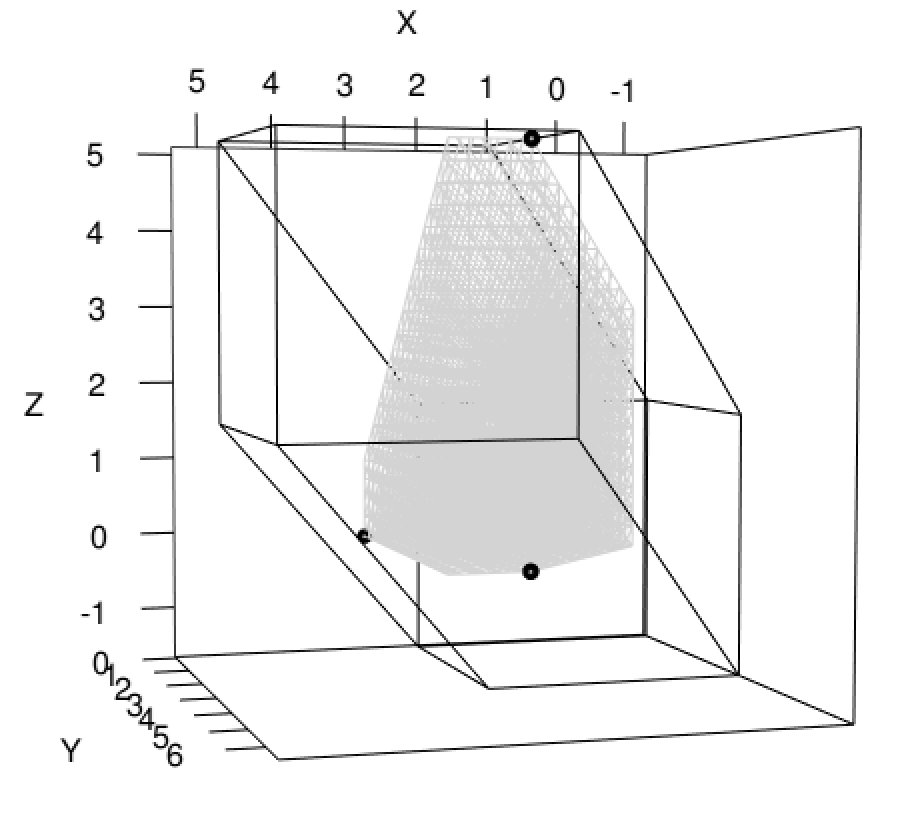}~
    \includegraphics[width=0.44\textwidth]{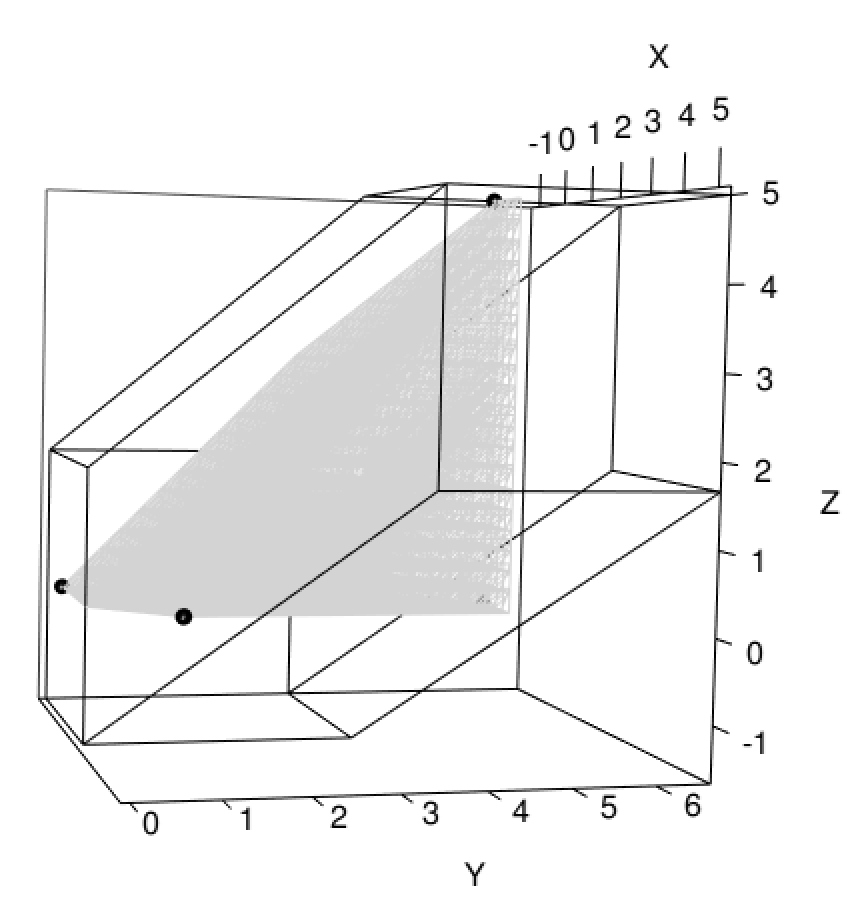}
     \includegraphics[width=0.44\textwidth]{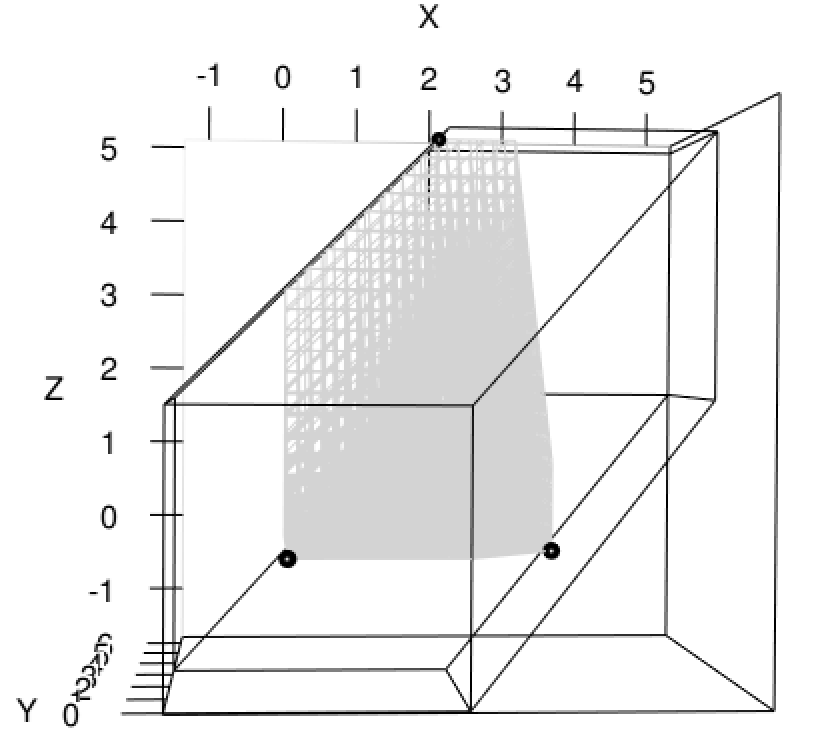}
    \caption{Results for Example~\ref{ex:ex4}.  The tropical polytope is shown in gray with its defining vertices in black. $B_r(P)$ is represented by the black lines.}
    \label{fig:3D_Ball}
\end{figure}

Knowing the $B_r(P)$ and $B_R(P)$ providess information about $P$ which we investigate further in Section~\ref{sec:Vol_est}.

\section{Estimating the Volume of a Tropical Polytope}\label{sec:Vol_est}

Estimating the volume of classical polytopes has wide application in math and science.  As shown in many publications, finding exact volume of any convex body is challenging, especially in higher dimensions~\citep{VOL_EST,  chev_vol}.  A number of methods have been developed to estimate the volume of classical polytopes to include the use of Ehrhart theory as well as MCMC methods~\citep{CV,EHR1}. In this section, we introduce a method to estimate the Euclidean volume of a tropical polytope using a HAR sampler with the tropical metric, an analogue of the method introduced in~\citep{CV} for classical polytopes.

\begin{remark}
As shown in~\citep{Gaubert}, computing the volume of a tropical polytope, $Vol(P)$, is NP-hard (see Theorem 5.5 and Corollary 5.6 in \citep{Gaubert}). Further, if a tropical polytope is described by inequalities, calculating  $Vol(P)$ is \#P-hard \citep[Theorem 8.1]{Gaubert}.  Additionally, we know that there is no polynomial time algorithm to approximate $Vol(P)$ unless $P = NP$ \citep[Theorem 7.8]{Gaubert}.   Therefore, in general estimating $Vol(P)$ is a hard problem.
\end{remark}

\subsection{Volume of a Tropical Ball in Tropical Projective Space}

In this section we apply $B_r(P)$, whose volume can be computed analytically, to the problem of estimating the volume of a given tropical polytope.  Ehrhart showed that the volume of a classical polytope can be determined by the leading coefficient of a rational quasi-polynomial for counting the number of lattice points inside of a dilated polytope~\citep{EHR1}.  Because a tropical ball, $B_l(x_0)$, is classically convex, we can calculate $Vol(B_l(x_0))$ by calculating the Ehrhart quasi-polynomial and determine its lead coefficient.  Even more helpful is how a tropical ball, $B_l(x_0) \in \mathbb{R}^e/\mathbb{R}\mathbf{1}$, essentially consists of $e$ hypercubes where each edge of the hypercube is of length, $l$.  This structure allows us to calculate $Vol(B_l(x_0))$ using the following equation:

\begin{equation}\label{eq:t_ball_vol}
    Vol(B_l(x_0))=e \times l^{(e-1)}.
\end{equation}

Equation~\eqref{eq:t_ball_vol} was shown  in  \citep[Corollary 3.3]{Gaubert}  and provides a straightforward calculation of $Vol(B_l(x_0))$. Table~\ref{tab:tball_volumes} shows different values of $Vol(B_l(x_0)$ of radius, $l=2$ and $l=4$, with increasing dimension.

\begin{table}[H]
    \centering
    \begin{tabular}{|c|c|c|}
    \hline
        $n$ & $Vol(B_2(x_0))$& $Vol(B_4(x_0))$\\\hline \hline
        3 &  6& 48 \\\hline
    4 &  32&256 \\\hline
    5 &  80&1280 \\\hline
    6 &  192 &6144\\\hline
    10 & 5120& $\approx 2.62 \times 10^6$\\\hline
    \end{tabular}
    \caption{Volume of $B_l(x_0) \in \mathbb{R}^n/\mathbb{R}\mathbf{1}$ with increasing $n$ for $l=2$ and $l=4$.}\label{tab:tball_volumes}
\end{table}

\subsection{Volume Estimation by Sampling from a Minimum Enclosing Ball}

We estimate the volume of a given tropical polytope via sampling points from $B_r(P)$ enclosing $P$ and then determining the proportion, $p$, of points that also fall inside of $P$.  Multiplying $Vol(B_r(P))$ by $p$ gives an estimate of $Vol(P)$. Knowing $B_r(P)$ for any tropical polytope, $P$, provides the bounds of $Vol(P)$.  

\begin{proposition}\label{prop:ub_bol}
Let $P$ be a tropical polytope.  Then $Vol(P)\leq Vol(B_r(P))$. Specifically, $Vol(B_r(P))$ is an upper bound on $Vol(P)$.
\end{proposition}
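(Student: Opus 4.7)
The plan is to derive this immediately from the defining property of the minimum enclosing tropical ball together with the monotonicity of Euclidean (Lebesgue) volume with respect to set inclusion. First I would recall from Definition \ref{df:tropicalball} that $B_r(P)$ is characterized as the tropical ball of smallest radius that fully contains $P$; in particular the containment $P \subseteq B_r(P)$ is built into the definition, so no separate geometric argument is needed to establish it.

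Next, I would note that both $P$ and $B_r(P)$ are subsets of the tropical projective torus $\mathbb{R}^e/\mathbb{R}\mathbf{1}$, which we identify with $\mathbb{R}^{e-1}$ by fixing the first coordinate to zero (as is done throughout Section~\ref{sec:balls}). Under this identification, the volume $Vol(\cdot)$ referred to in the statement and in the preceding subsection is ordinary $(e-1)$-dimensional Lebesgue measure. Lebesgue measure is monotone, so $P \subseteq B_r(P)$ immediately yields $Vol(P) \leq Vol(B_r(P))$.

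Finally, for the proof to be rigorous I would briefly justify that both $P$ and $B_r(P)$ are Lebesgue-measurable: both are finite unions/intersections of closed half-spaces (the $h^*$-representation from Definition~\ref{def:h_rep} applies to the simplices in $\Delta_P$ whose union is the $(e-1)$-trunk $Tr_{e-1}(P)$, and a tropical ball is itself a polytrope), so both are closed polyhedral sets and in particular Borel-measurable, making the monotonicity step valid.

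The argument is essentially definitional, so there is no serious obstacle; the only thing to watch is the convention about what $Vol(P)$ means for a tropical polytope that is not full-dimensional. If $P$ fails to contain a $Tr_{e-1}(P)$ then $Vol(P) = 0$ and the inequality is trivial, which is consistent with the standing assumption made just before Section~\ref{sec:balls} that all tropical polytopes under consideration have a nonempty $(e-1)$-trunk. Thus the proof reduces to a single line: $P \subseteq B_r(P) \Rightarrow Vol(P) \leq Vol(B_r(P))$.
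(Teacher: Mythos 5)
Your argument is exactly the paper's: the containment $P \subseteq B_r(P)$ is built into the definition of the minimum enclosing tropical ball, and monotonicity of volume gives the inequality immediately. The additional remarks about measurability and the convention for lower-dimensional $P$ are harmless elaboration, but the core one-line argument is the same as in the paper.
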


\begin{proof}
By definition, $B_r(P)$ encloses $P$.  Therefore, $Vol(P)$ must be less than or equal to $Vol(B_r(P))$, establishing an upper bound.
\end{proof}

Likewise, we can establish a lower bound for any tropical polytope with a $Tr_{e-1}(P)$.  In~\citep{Gaubert}, the authors show the lower bound for a polytrope is the volume of the maximum inscribed ball (See Corollary 3.6).  The following proposition is an extension to the general case.

\begin{proposition}\label{prop:lb_bol}
Let $P$ be any tropical polytope containing a $Tr_{e-1}(P)$ where $e\geq 3$.  The $B_R(P)$ contained in $Tr_{e-1}(P)$ provides a lower bound on the volume of $P$.  Specifically,

\[Vol(P)\geq Vol(B_R(P))\].

\end{proposition}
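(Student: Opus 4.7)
The plan is to reduce the claim to a one-line monotonicity argument: establish that $B_R(P) \subseteq P$ by construction, and then invoke monotonicity of the Euclidean (Lebesgue) volume on measurable subsets of $\mathbb{R}^e/\mathbb{R}\mathbf{1} \cong \mathbb{R}^{e-1}$. Since the lemma is really a sanity-check compatibility statement rather than a new structural fact, the work is essentially unpacking definitions.

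First I would establish the containment $B_R(P) \subseteq P$. For a tropical simplex $P_\Delta$, this is baked into the linear program~\eqref{eq:maxball}--\eqref{eq:maxball4}: the $e$ axis-aligned vertices of the candidate tropical ball, expressed in terms of the center $x_0$ and radius $R$, are forced to satisfy every $h^*$-representation inequality of $P_\Delta$ coming from Definition~\ref{def:h_rep}. Because $P_\Delta$ is tropically convex and a tropical ball is the tropical convex hull of those $e$ vertices, the whole ball sits inside $P_\Delta$. For a general tropical polytope $P$ I would appeal to the construction in Section~3.1.2: $B_R(P) = B_{R_i}(P_\Delta^i)$ where $P_\Delta^i \in \Delta_P$ is the tropical simplex achieving the largest inscribed radius. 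Since $P_\Delta^i \subseteq \bigcup_{j=1}^t P_\Delta^j = P$, we obtain the chain $B_R(P) \subseteq P_\Delta^i \subseteq P$.

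Second, I would apply monotonicity of Euclidean volume: for measurable $A \subseteq B \subseteq \mathbb{R}^e/\mathbb{R}\mathbf{1}$ one has $Vol(A) \leq Vol(B)$. Taking $A = B_R(P)$ and $B = P$ immediately delivers $Vol(B_R(P)) \leq Vol(P)$. The assumptions $e \geq 3$ and existence of a $Tr_{e-1}(P)$ are present precisely to ensure that both sides are full-dimensional and strictly positive, so that the bound is meaningful and not a vacuous $0 \leq 0$.

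The only potentially subtle point is in the non-polytrope case: the $h^*$-representation extracted via Algorithm~\ref{alg:hyperplane_rep} only cuts out $Tr_{e-1}(P_\Delta^i)$ rather than all of $P_\Delta^i$, so one must verify that forcing the ball's vertices into the $h^*$-cut region does land the ball inside $P$. This is exactly what the hypothesis ``$B_R(P)$ contained in $Tr_{e-1}(P)$'' is there to guarantee, and it is why the proposition is stated with that hypothesis attached. Beyond that bookkeeping, there is no obstacle: the proof is a two-step chain of containment followed by measure monotonicity.
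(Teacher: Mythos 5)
Your proposal is correct and follows essentially the same route as the paper's own proof: note that $B_R(P)$ is contained in $Tr_{e-1}(P)\subseteq P$ by construction, then invoke monotonicity of volume under inclusion. The paper phrases this slightly more tersely by passing through the identification $B_R(P)=B_R(Tr_{e-1}(P))$, while you unpack the containment via the LP and the simplicial decomposition $\Delta_P$, but the key step — containment plus measure monotonicity — is identical.
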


\begin{proof}
The $Tr_{e-1}(P)$, of $P$ represents a tropical polytope of dimension $(e-1)$~\citep{TVOL_2}.  Any tropical polytope, $P$, where $dim(P)=(e-1)$ contains a $B_R(P)$.   Therefore, $B_R(P)$ of the $(e-1)$-trunk, $B_R(Tr_{e-1}(P))=B_R(P)$. Therefore,
\[Vol(B_R(P))=Vol(B_R(Tr_{e-1}(P))\leq Vol(P)).\]
\end{proof}

While Propositions~\ref{prop:ub_bol} and~\ref{prop:lb_bol} provide bounds on $Vol(P)$, the range between these bounds can be very large.  

Next we summarize our volume estimation technique.  Given a tropical polytope $P\in\mathbb{R}^e/\mathbb{R}\mathbf{1}$, calculate $B_r(P)$.  Using the HAR sampler described in~\ref{app:min_plus_HAR} (see Algorithm~\ref{alg:HAR_extrapolation4}), sample points from $B_r(P)$.  Determine the proportion $p$, of sampled points from $B_r(P)$ that also fall in $P$.  Estimate $Vol(P)$ by the following calculation with $\cdot$ representing classical multiplication,

\begin{equation}
    Vol(P)\approx p\cdot Vol(B_r(P)).
\end{equation}

\noindent Algorithm~\ref{alg:Vol_Est} below describes  these steps to estimates the volume of $P$ by sampling from $B_r(P)$.

\begin{algorithm}[H]
\caption{Volume Estimation of $P$ by Sampling from $B_r(P)$.} \label{alg:Vol_Est}
\begin{algorithmic}
\State {\bf Input:} Tropical polytope $P:=\tconv(v_1, \ldots, v_s)$; a minimum enclosing ball $B_r(P)$ calculated using~\eqref{eq:minball}; an initial point $x_0 \in\mathbb{R}/\mathbb{R}\mathbf{1}$ and desired number of points $I$.
\State {\bf Output:} Volume estimate of $P$.
\State Set $C=0$
\For{$k= 0, \ldots , I-1$,}
\State Generate random point $x_{k+1}$ using Algorithm~\ref{alg:HAR_extrapolation4} in~\ref{app:min_plus_HAR}.
\If{$x_{k+1}\in P$} 
\State$C=C+1$.
\EndIf
\EndFor 
\State Set $p=\frac{C}{I}$
\State Set $Vol(P)=p\cdot Vol(B_r(P))$\\
\Return $Vol(P)$.
\end{algorithmic}
\end{algorithm}

Because Algorithm~\ref{alg:Vol_Est} samples from $B_r(P)$ in order to estimate $Vol(P)$, sampling more points from $B_r(P)$ will yield better results.  Therefore, the number of points to sample is related to $Vol(B_r(P))$ relative to $Vol(Tr_{e-1}(P))$.  One way to evaluate this relationship is by comparing the ratio between the radii of $B_R(P)$ and $B_r(P)$ which is akin to a similar method described in~\citep{CV} for classical polytopes.  A small ratio suggests that $Vol(Tr_{e-1}(P))$ is relatively small compared $Vol(B_r(P))$, requiring more sampled points to obtain a better estimate  of $Vol(P)$ leading to the next proposition.  

\begin{proposition}
Given a minimum enclosing ball, $B_r(P)$, and maximum inscribed ball, $B_R(P)$, the rate, $A(P)$, of sampled points from $B_r(P)$ falling inside $P$ is bounded below by,
\begin{equation}
    A(P)\geq \left(\frac{R}{r}\right)^{(n-1)}.
\end{equation}
\end{proposition}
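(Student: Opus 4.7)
The plan is to interpret the acceptance rate $A(P)$ as a ratio of Euclidean volumes and then bound that ratio by combining the lower bound on $\mathrm{Vol}(P)$ from Proposition~\ref{prop:lb_bol} with the closed-form volume of a tropical ball in~\eqref{eq:t_ball_vol}. Since Algorithm~\ref{alg:Vol_Est} samples uniformly from $B_r(P)$ (by construction of the HAR sampler in Algorithm~\ref{alg:HAR_extrapolation4}), the expected fraction of sampled points landing in $P$ equals the volume ratio
\[
A(P) \;=\; \frac{\mathrm{Vol}(P)}{\mathrm{Vol}(B_r(P))}.
\]
This identification of $A(P)$ with a volume ratio is the starting point.

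Next, I would invoke Proposition~\ref{prop:lb_bol}, which asserts that the maximum inscribed tropical ball $B_R(P)$ lies inside $P$ and hence $\mathrm{Vol}(P)\geq \mathrm{Vol}(B_R(P))$, together with monotonicity of the numerator to obtain
\[
A(P) \;\geq\; \frac{\mathrm{Vol}(B_R(P))}{\mathrm{Vol}(B_r(P))}.
\]
Now I would substitute the explicit formula from~\eqref{eq:t_ball_vol}, namely $\mathrm{Vol}(B_l(x_0)) = e\cdot l^{(e-1)}$ for any tropical ball of radius $l$ in $\mathbb{R}^e/\mathbb{R}\mathbf{1}$. Both balls share the ambient dimension, so the factor of $e$ cancels, yielding
\[
\frac{\mathrm{Vol}(B_R(P))}{\mathrm{Vol}(B_r(P))} \;=\; \frac{e\cdot R^{(e-1)}}{e\cdot r^{(e-1)}} \;=\; \left(\frac{R}{r}\right)^{(e-1)},
\]
which is the stated bound (with $n$ in the proposition being the same ambient dimension $e$).

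The genuinely nontrivial ingredient is already built into Proposition~\ref{prop:lb_bol}; the present proposition is essentially a corollary of that fact combined with the volume formula. The only subtle point I would be careful about is justifying that the acceptance rate equals the volume ratio, which relies on the HAR sampler being uniform on $B_r(P)$. Assuming this distributional property (as stated where Algorithm~\ref{alg:HAR_extrapolation4} is introduced), the remainder is a one-line substitution, so I do not anticipate any serious obstacle beyond clearly stating which results are being invoked.
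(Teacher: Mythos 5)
Your proposal is correct and follows essentially the same route as the paper: both arguments reduce to computing the ratio of two tropical-ball volumes via Equation~\eqref{eq:t_ball_vol}. In fact your write-up is more complete than the paper's proof, which jumps directly to the ratio $\mathrm{Vol}(B_l(x_0))/\mathrm{Vol}(B_k(y_0))=(l/k)^{n-1}$ for two nested tropical balls and then simply asserts ``Therefore we achieve the result''; it leaves implicit both the identification $A(P)=\mathrm{Vol}(P)/\mathrm{Vol}(B_r(P))$ (which, as you correctly flag, rests on the HAR sampler producing a uniform distribution on $B_r(P)$) and the intermediate bound $\mathrm{Vol}(P)\geq\mathrm{Vol}(B_R(P))$ from Proposition~\ref{prop:lb_bol}. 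You have supplied exactly those two missing links, so your argument is a fleshed-out version of the same idea rather than a different one.
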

\begin{proof}\label{prop:ac_rate}
Consider two tropical balls, $B_l(x_0)$ and $B_k(y_0)$, in $\mathbb{R}^n/\mathbb{R}\mathbf{1}$ such that $B_l(x_0)\subset B_k(y_0)$.  Recall that $Vol(B_l(x_0))$ and $Vol(B_k(y_0))$ are found using~\eqref{eq:t_ball_vol}. Let,
\begin{align}
    \frac{Vol(B_l(x_0))}{Vol(B_k(y_0))}&=\frac{n*l^{n-1}}{n*k^{n-1}}\\
    &=\frac{l^{n-1}}{k^{n-1}}\\
    &=\left(\frac{l}{k}\right)^{(n-1)}.
\end{align}
Therefore we achieve the result.
\end{proof}

Proposition~\ref{prop:ac_rate} helps provide insight to the difficulty in estimating $Vol(P)$ using Algorithm~\ref{alg:Vol_Est}.  Further,  as $dim(P)$ increases, $Vol(B_r(P))$ increases as shown in Table~\ref{tab:tball_volumes}, for a constant $r$.

\begin{example}\label{ex:rat_comp}
    Consider the tropical polytope, $P$, defined in Example~\ref{ex:ncpoly2} and shown in Figure~\ref{fig:tpolex2}.  The maximum inscribed ball for $P$ has a radius, $R=0.5$, indicating that $Vol(B_R(P))=0.5$.  The minimium enclosing ball of $P$ has a radius, $r=5$, resulting in $Vol(B_r(P))=500$.  Employing Algorithm~\ref{alg:Vol_Est} in this case will involve sampling $B_r(P)$ where a $Vol(B_r(P))=500$ to estimate $Vol(P)$ that has a volume closer to $0.5$.  
\end{example}

Next, we offer some results using Algorithm~\ref{alg:Vol_Est} applied to some of the previously defined tropical polytopes.

\begin{example}[Example~\ref{ex:rat_comp} cont'd]\label{ex:polytope_vol}
    Consider the tropical polytope from Example~\ref{ex:ncpoly2}.  $B_r(P)$ has a radius of $r=5$ meaning that $Vol(B_r(P))=500$.  Table~\ref{tab:Vol_est1} shows the volume estimate using Algorithm~\ref{alg:Vol_Est} using increasing sample sizes. Here $I$ is the sample size and $C$ is the number of sampled points that fall in $P$.
 \begin{table}[H]
    \centering
    \begin{tabular}{|l|l|l|}
    \hline
        \multicolumn{1}{|c|}{$I$} &\multicolumn{1}{|c|}{$C/I$} &\multicolumn{1}{|c|}{$Vol(P)$}\\\hline \hline
        1000 &  0.003&1.5 \\\hline
    10000 & 0.0049 & 2.45\\\hline
    100000 &  0.00529& 2.645\\\hline
    \end{tabular}
    \caption{Volume estimate of $P \in \mathbb{R}^4/\mathbb{R}\mathbf{1}$ with varying number of sampled points, $I$, from $B_r(P)$.}
    \label{tab:Vol_est1}
\end{table}
    
\end{example}

\begin{example}[Example~\ref{ex:nc_tpoly}, cont'd]
    We apply Algorithm~\ref{alg:Vol_Est} to $P$ from Example~\ref{ex:nc_tpoly}, varying the number of sampled points taken from the $B_r(P)$.  Recall that $r=4$ for $B_r(P)$ meaning that $Vol(B_r(P))=48$. Table~\ref{tab:Vol_est2} shows the estimate of $Vol(P)$ and Figure~\ref{fig:ex_vol2d} illustrates the point dispersion for $I=1000$ and $I=10000$.

\begin{table}[H]
    \centering
    \begin{tabular}{|l|l|l|}
    \hline
        \multicolumn{1}{|c|}{$I$} &\multicolumn{1}{|c|}{$C/I$} &\multicolumn{1}{|c|}{$Vol(P)$}\\\hline \hline
        1000 &  0.168&8.064 \\\hline
    10000 & 0.1993 & 9.564\\\hline
    100000 &  0.1965& 9.4392\\\hline
    \end{tabular}
    \caption{Volume estimate of $P \in \mathbb{R}^3/\mathbb{R}\mathbf{1}$ with varying number of sampled points, $I$, from $B_r(P)$.}
    \label{tab:Vol_est2}
\end{table}
\begin{figure}[H]
    \centering
    \includegraphics[width=0.44\textwidth]{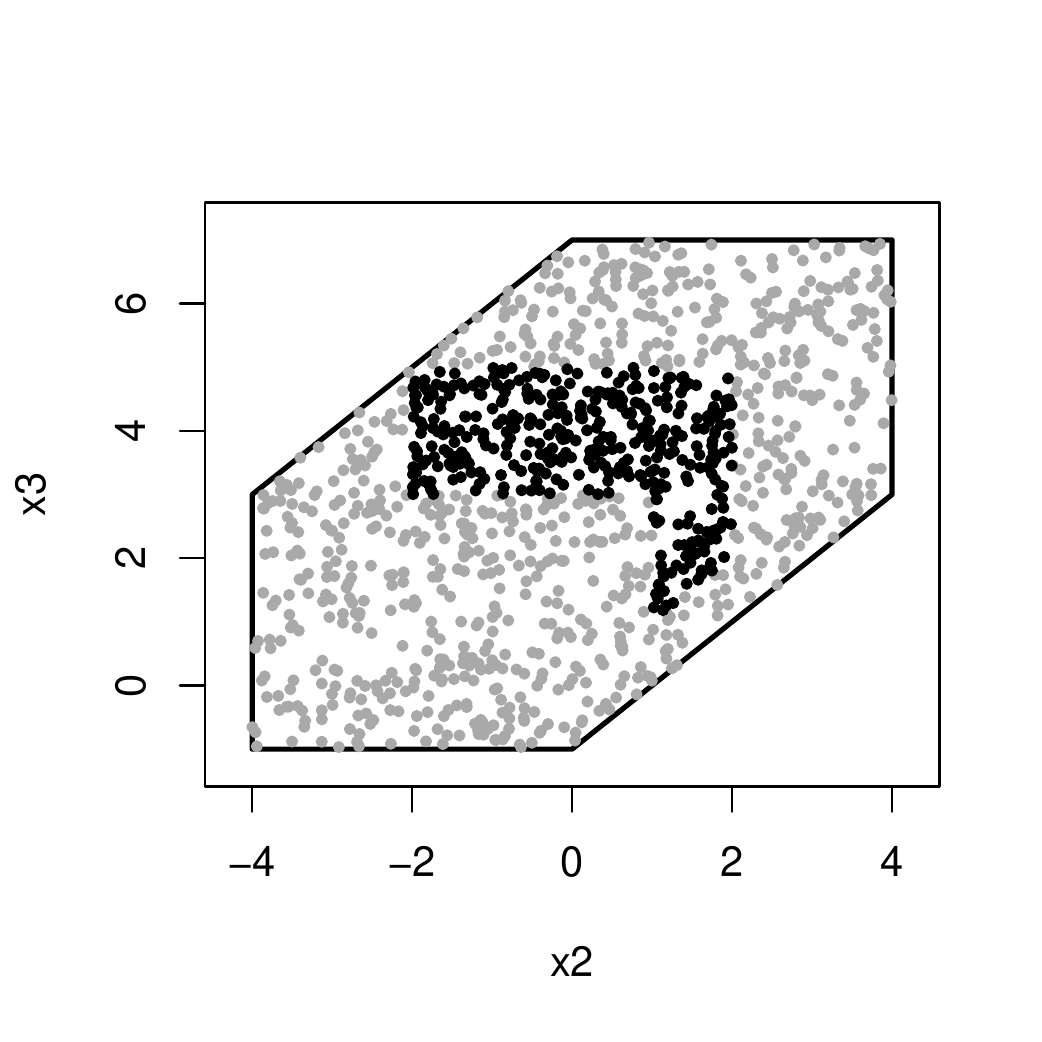}~
    \includegraphics[width=0.44\textwidth]{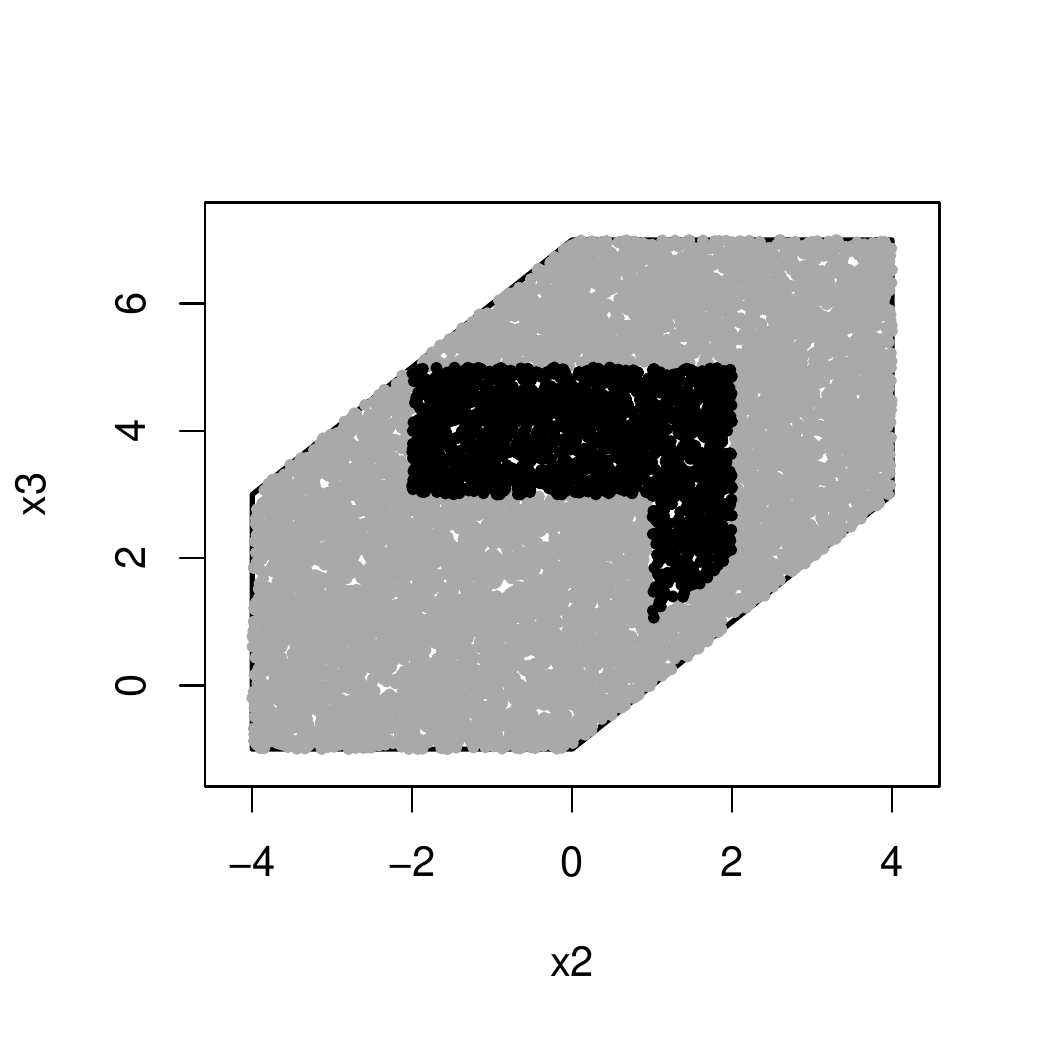}
    \caption{Results for Example 4.6 sampling 1,000 points (left) and 10,000 points (right). Black points indicate those that fall inside of $P$.}
    \label{fig:ex_vol2d}
\end{figure}
\end{example}

\subsection{Rounding a Tropical Polytope}
In this section we consider the idea of ``rounding'' a tropical polytope in order to reduce the sample size needed to estimate $Vol(P)$. In~\citep{CV}, rounding a classical polytope means applying a linear transformation to the polytope in near-isotropic position.  This linear transformation reduces the difference between radii of the Euclidean maximum inscribed ball and the minimum enclosing ball for a classical polytope.  This reduces the sample size required to estimate the volume of a given classical polytope~\citep{CV}.

In this section, we propose an analogue of this linear transformation to a tropical polytope $P$. To do so, first, we identify a minimum enclosing tropical ball that only contains $Tr_{e-1}(P)$ denoted as $B_k(Tr_{e-1}(P))$ by removing all {\em (e-2)-tentacles} of $P$ such that only $Tr_{e-1}(P)$ remains.  Since $Tr_{e-1}(P)\subseteq P$, we have $B_k(Tr_{e-1}(P)) \subseteq B_r(P)$ where $k\leq r$.  Therefore, we have \[Vol(B_k(Tr_{e-1}(P)))\leq Vol(B_r(P)).\]  
Thus, by removing all {\em (e-2)-tentacles} of $P$, the minimum enclosing tropical ball, $B_k(Tr_{e-1}(P))$, has smaller volume, requiring a smaller sample size to estimate $Vol(P)$.

Here we will focus only on a tropical simplex $P_\Delta$, that is not a polytrope but possesses a $Tr_{e-1}(P)$ (see Figure~\ref{fig:tpolex2}). For any $P_\Delta$ possessing $Tr_{e-1}(P_\Delta)$, $Tr_{e-1}(P_\Delta)$ is classically convex.  For the purposes of this paper, we begin with a vertex representation of $P_\Delta$ denoted as $v-representation(P_\Delta)$.  


Algorithm~\ref{alg:rounding} illustrates how to find $B_k(Tr_{e-1}(P_\Delta))$.  The algorithm involves enumerating {\em pseudo-vertices} which is known to be NP-hard for classical polyhedra with its hyperplane-representation~\citep{Vertex_ENUM}.  

\begin{definition}[Pseudo-vertex (See~\citep{JoswigKulas+2010+333+352})]\label{def:pseudovert}
    For a given $\mathcal{A}^{min}(V)$, a pseudo-vertex of a $\max$-tropical polytope $P$, is any point, $x\in P$ coincident with the intersection of $(e-1)$ or more min-tropical hyperplanes. 
\end{definition}

To enumerate the pseudo-vertices of $Tr_{e-1}(P_\Delta)$, we first construct the $h^*-representation(P_\Delta)$ of  $P_\Delta$.  We then apply a {\em double-description} algorithm described in~\citep{Avis} and implemented by Fukuda in \texttt{cddlib}~\citep{cddlib} where a {\em double-description} is defined as follows,

\begin{definition}[Double Description Methods (See~\citep{TDDM})]
    A {\em double description method} is an algorithm that allows the computation of the vertex representation of a polyhedron that is defined by inequalities.
\end{definition}

When using the {\em double-description} the $h^*-representation(P_\Delta)$ is the input.  Because the {\em double-description} algorithm is used to enumerate the vertices of classical polytopes described in hyperplane-representation, only the pseudo-vertices of the $Tr_{e-1}(P_\Delta)$ will be enumerated since $Tr_{e-1}(P_\Delta)$ is classically convex.  By then applying~\eqref{eq:minball}-~\eqref{eq:minball3} we can construct $B_k(Tr_{e-1}(P_\Delta))$. 

\begin{algorithm}[H]
\caption{Rounding a tropical simplex, $P_\Delta$.} \label{alg:rounding}
\begin{algorithmic}
\State {\bf Input:} Tropical simplex $P_\Delta:=\tconv(v_1, \ldots, v_e)$.
\State {\bf Output:} Minimum enclosing ball, $B_k(Tr_{e-1}(P_\Delta))$.
\State Construct the $h^*-representation(P_\Delta)$.
\State Enumerate the set of pseudo-vertices defining $Tr_{e-1}(P_\Delta)$ using a $double-description$ method~\citep{cdd}.
\State Compute the minimum enclosing ball, $B_k(Tr_{e-1}(P_\Delta))$ using~\eqref{eq:minball}-~\eqref{eq:minball3}.\\
\Return $B_k(Tr_{e-1}(P_\Delta))$.
\end{algorithmic}
\end{algorithm}

 The following example shows how to apply Algorithm~\ref{alg:rounding} to a $P_\Delta$ that is not a polytrope.

\begin{example}[Example~\ref{ex:polytope_vol} cont'd]\label{ex:rounding}
    Consider the tropical polytope from Example~\ref{ex:polytope_vol}.  Note that this is also a tropical simplex, $P_\Delta$. The $B_r(P_\Delta)$ has radius $r=5$ meaning that $Vol(B_r(P_\Delta))=500$.  Proceeding through Algorithm~\ref{alg:rounding}, we get the following set of pseudo-vertices defining the boundary of $Tr_{e-1}(P_\Delta)$:

  \[  \left(\begin{array}{cccccccc}
        0 & 0 &0&0&0&0&0&0 \\
        0 & 0 &0&0&1&1&1&1 \\
        1 & 2 &1&2&2&2&3&3 \\
        4 & 4 &6&7&5&7&5&8 \\
    \end{array} \right).\]

\noindent Computing the new minimum enclosing ball, $B_k(Tr_{e-1}(P_\Delta))$, yields a radius, $r=2$.  This $B_k(Tr_{e-1}(P_\Delta))$ has a volume, $Vol(B_r(Tr_{e-1}(P_\Delta)))=32$.  Figure~\ref{fig:pseudo_simplex} shows the resulting $Tr_{e-1}(P_\Delta)\subset P_\Delta$.
\end{example}

\begin{figure}[H]
    \centering
    \includegraphics[width=0.44\textwidth]{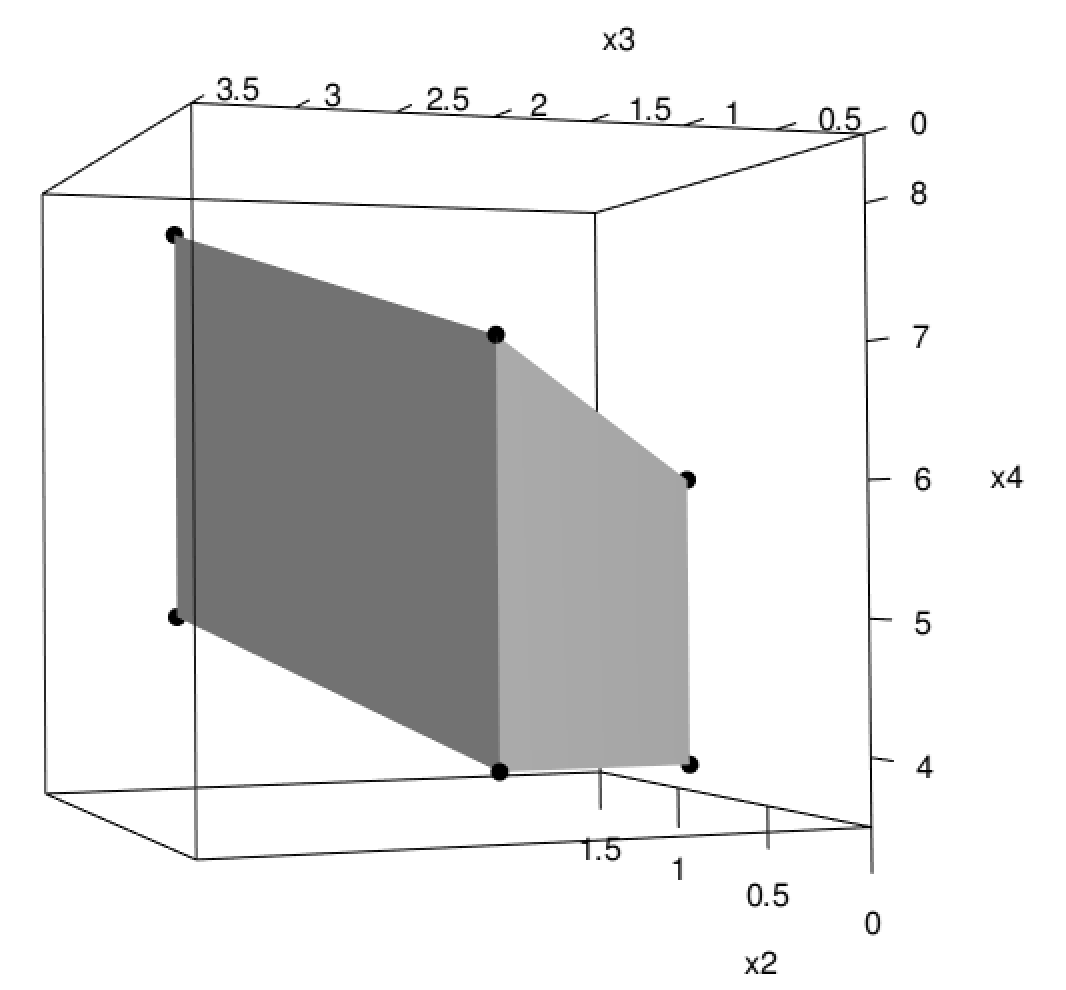}~
    \includegraphics[width=0.44\textwidth]{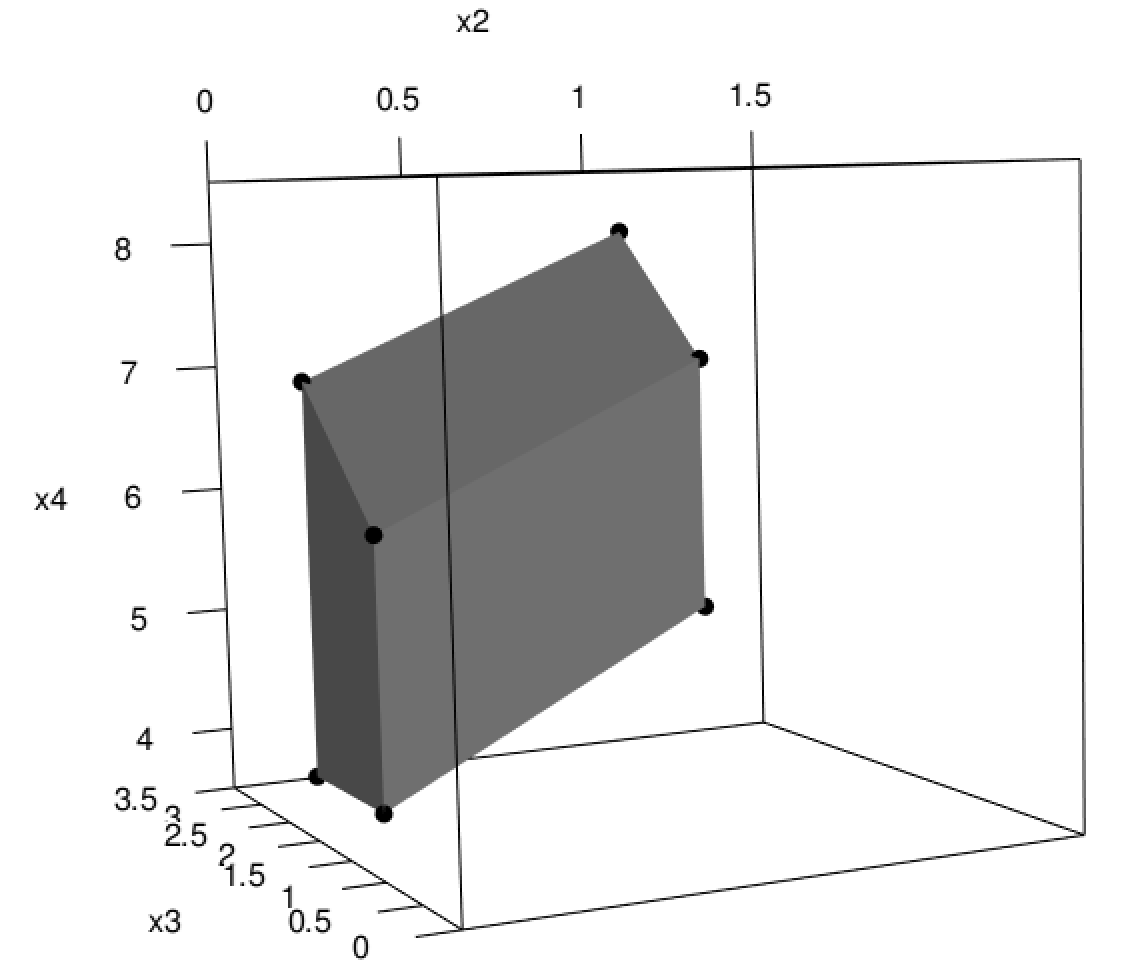}
    \caption{$Tr_{(e-1)}(P)$ defined in Example~\ref{ex:rounding} using  Algorithm~\ref{alg:rounding} with the \texttt{rcdd} package~\citep{rcdd}. Red points represent the pseudo-vertices.}
    \label{fig:pseudo_simplex}
\end{figure}


\noindent Table~\ref{tab:Vol_est3} compares the estimate of $Vol(P_\Delta)$ before and after rounding.  

\begin{table}[H]
    \centering
    \begin{tabular}{|l|l|l|}
    \hline
        \multicolumn{1}{|c|}{$I$} &\multicolumn{1}{|c|}{$Vol(P_\Delta)$} &\multicolumn{1}{|c|}{$Vol(Tr_{(e-1)}(P_\Delta))$}\\\hline \hline
        1000 &  1.5&2.752 \\\hline
    10000 & 2.45 & 2.512\\\hline
    100000 &  2.645& 2.517\\\hline
    \end{tabular}
     \caption{Volume estimates of $P_\Delta,Tr_{e-1}(P_\Delta) \in \mathbb{R}^4/\mathbb{R}\mathbf{1}$ with varying number of sampled points, $I$ for Example~\ref{ex:rounding}.}
    \label{tab:Vol_est3}
\end{table}

\section{Sampling from a Union of Tropical Simplices in a Tropical Polytope}\label{sec:uni_samp}

In this section, we focus on uniform sampling from $Tr_{e-1}(P)\subseteq P$ that is not classically convex where $P$ is a tropical polytope.  Our goal is to identify a set of tropical simplices in $\Delta_P$, such that the union of these tropical simplices define $Tr_{e-1}(P)$.  Then, using Algorithm~\ref{alg:HAR_extrapolation4} in~\ref{app:min_plus_HAR}, sample from each tropical simplex according to the proportion of $Tr_{e-1}(P)$ they define. In \cite{Trop_HAR}, the authors show that uniform sampling from a polytrope is possible though do not implement the method.  This can be extended to any tropical simplex, $P_\Delta$, that is not contained within a tropical hyperplane (See Lemma 2 in ~\citep{JoswigKulas+2010+333+352}).   

Unfortunately, for a $Tr_{e-1}(P)\in P$ that is not classically convex, we cannot apply Algorithm~\ref{alg:HAR_extrapolation4} directly due to biased sampling of the boundary of $P$, denoted as $\partial P$. Instead, given a vertex set, $V$ defining $P$, where $|V|=n$ and $n>e$, we must consider all $\binom{n}{e}$ combinations of $e$ vertices in $V$.  Each combination defines a tropical simplex, $P_\Delta^i\in \Delta_P$ where $\bigcup_{i=1}^{\binom{n}{e}} P_\Delta^i= P$, which also defines $Tr_{(e-1)}(P)$.

Once we enumerate each $P_\Delta^i \in \Delta_P$, we must find a subset of tropical simplices, $\Delta_P'\subset \Delta_P$, such that $\bigcup_{k\in [|\Delta_P'|]} P_\Delta^k=P' \subseteq P$ where $Tr_{(e-1)}(P) \subset P'$ and $0 \leq dim(P_\Delta^i \cap P_\Delta^j)\leq e-2$ for all $P_\Delta^i,P_\Delta^j \subseteq \Delta_P'$. To find $\Delta_P'$, we can use Algorithm~\ref{alg:HAR_extrapolation4} to sample from $B_r(P)$ and identify those points of the entire sample, $X$, that fall inside of $P$. The sampled points that fall in $P$ we call $X_P$. We then determine the proportion, $p_i$, of points in $X_P$ that fall in each $P_\Delta^i \in \Delta_P$.  Finally, we say $\Delta_P'=\left\{P_\Delta^i,\ldots,P_\Delta^k\right\}$ where $i,k\in\left[\left|\binom{n}{e}\right|\right]$ and $\sum\limits_{j\in\{i\ldots, k\}} p_j=1$. 

\begin{algorithm}[H]
\caption{Identifying $\Delta_P'\subset \Delta_P$ such that $0\leq dim(P_\Delta^i\cap P_\Delta^j)\leq e-2$ $\forall\; P_\Delta^i,P_\Delta^j \subset \Delta_P'$ where $\bigcup_{k\in [|\Delta_P'|]} P_\Delta^k=P'$ such that $Tr_{(e-1)}(P) \subset P'$.} \label{alg:subset}
\begin{algorithmic}
\State {\bf Input:} Tropical polytope $P:=\tconv(v_1, \ldots, v_e)$; starting point, $x_0$; sample size, $I$.
\State {\bf Output:} A set of tropical simplices, $\Delta_P'\in \Delta_P$ and $\mathbf{p}=\{p_i\ldots p_k\}$ where $i,k\in\left[\left|\binom{|V|}{e}\right|\right]$.
\State Sample $I$ points from $B_r(P)$ using Algorithm~\ref{alg:HAR_extrapolation4} and call this set $X$.
\State Let $X_P \subset X$ where $X_P$ is the subset of points that fall inside $P$.
\State Enumerate each tropical simplex $P_\Delta^i \subset P$.
\State For each $P_\Delta^i \subset P$ calculate $\frac{|X_{P}\in P_\Delta^i|}{|X_P|}=p_i$. 
\State Let $\Delta_P'=\left\{P_\Delta^i,\ldots,P_\Delta^k\right\}$ where $P'=\bigcup_{j\in\{i,\ldots,k\}} P_\Delta^j$ such that $\sum\limits_{j\in\{i\ldots, k\}} p_j=1$. 
\\
\Return $\Delta_P'$ and $\mathbf{p}=\{p_i\ldots p_k\}$ where $i,k\in[\left|\binom{n}{e}\right|]$.
\end{algorithmic}
\end{algorithm}

After identifying $\Delta_P'$ and $\mathbf{p}$, we then employ Algorithm~\ref{alg:HAR_extrapolation4} again and sample each $P_\Delta^i\in\Delta_P'$ according to its proportion $p_i \in \mathbf{P}$ as shown in Algorithm~\ref{alg:unisample}.

\begin{algorithm}[H]
\caption{Uniform Sampling $Tr_{(e-1)}(P)\in P$, where $P\in \mathbb{R}^e/\mathbb{R}\mathbf{1}$ and $Tr_{e-1}(P)$ is not classically convex.} \label{alg:unisample}
\begin{algorithmic}
\State {\bf Input:} A set of tropical simplices $\Delta_P'=\left\{P_\Delta^i,\ldots,P_\Delta^k\right\}$ and $\mathbf{p}=\{p_i,\ldots,p_k\}$ where $i,k\in\left[\left|\binom{n}{e}\right| \right]$ taken from Algorithm~\ref{alg:subset}; starting point, $y_0$; and a sample size $J$.
\State {\bf Output:} A set, $Y$, of points sampled uniformly from $Tr_{(e-1)}(P)$.
\For {$j=0\ldots J-1$}
\State Randomly select $P_\Delta^i\subset \Delta_P'$ according to proportions defined in $\mathbf p$.
\State Sample $y_{j+1}$ from $P_\Delta^i$ using Algorithm~\ref{alg:HAR_extrapolation4}.
\EndFor\\
\Return $Y$.
\end{algorithmic}
\end{algorithm}

As an example, we refer back to Figure~\ref{fig:tpolex}.  Note that the vertex set, $V$, defining $P\in\mathbb{R}^3/\mathbb{R}\mathbf{1}$ has a cardinality, $|V|=4$.  This results in $\binom{4}{3}$ tropical simplices to examine (See Figure~\ref{fig:tpolex1}). In this case, either the pair of tropical simplices on the left of Figure~\ref{fig:tpolex1} or the right represent sets of tropical simplices that only intersect at their boundary but still define $Tr_{e-1}(P)$. We will consider the pair of tropical simplices on the right in the following example.

\begin{example}\label{ex:uni_samp1}
    Consider the tropical polytope $P=\{(0,-2,3),(0,-2,5), (0,2,2),(0,1,0)\}$.  The tropical polytope, $P$, may be decomposed into the two tropical simplices, $P_\Delta^1=\{(0,-2,5),(0,-2,3),$\\$(0,1,0)\})$ with $p_1=0.62$ and $P_\Delta^2=\{(0,-2,5),(0,1,0),(0,2,2)\})$ with $p_2=0.38$. Using Algorithm~\ref{alg:unisample} and setting the sample size, $I=2000$, we sample $P_\Delta^1$ and $P_\Delta^2$ according to the proportions $p_1$ and $p_2$. Figure~\ref{fig:uni_results1} shows the results for this example.

    \begin{figure}[H]
    \centering
\includegraphics[width=0.55\textwidth]{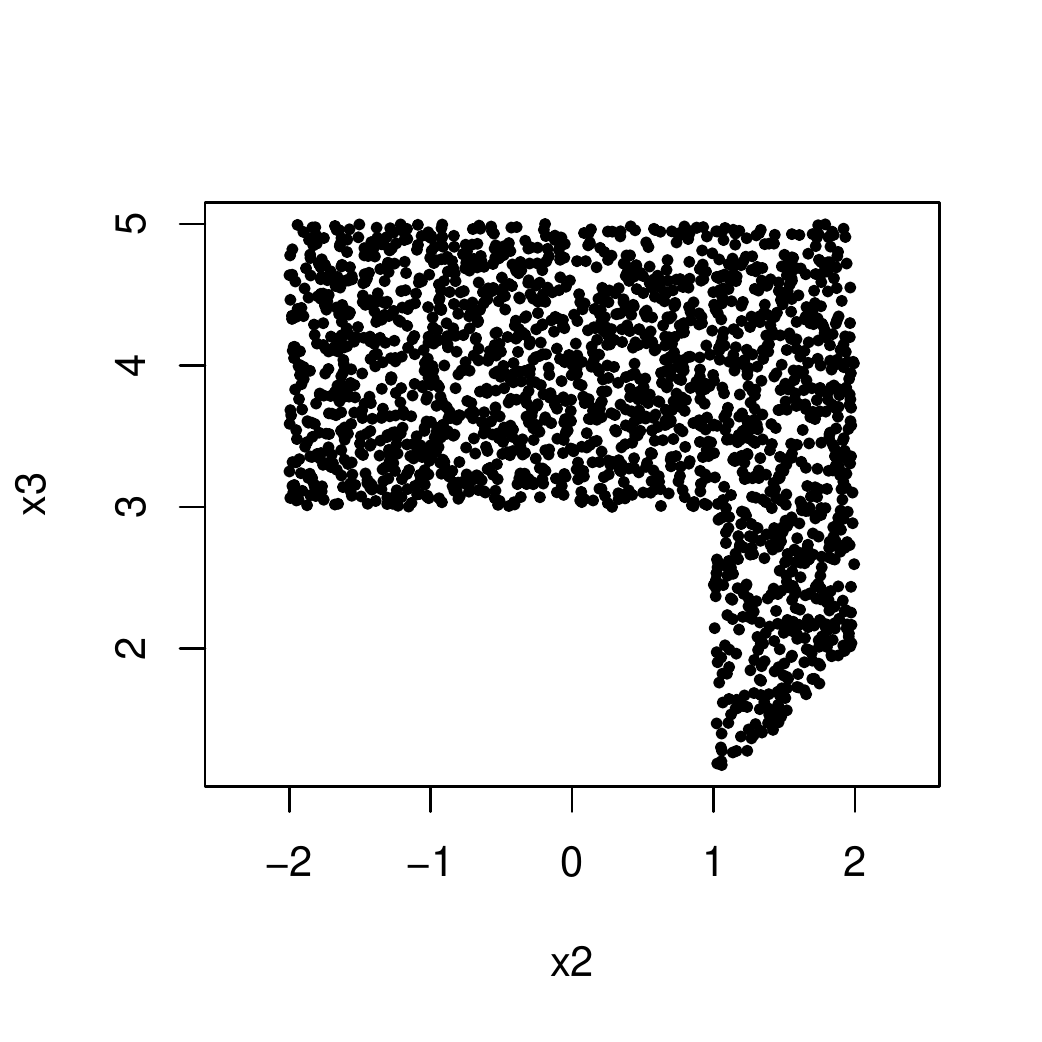}~
    \caption{Results for Example~\ref{ex:uni_samp1}.}
    \label{fig:uni_results1}
\end{figure}
\end{example}

\begin{example}\label{ex:uni_samp2}
    Consider the tropical polytope $P=\{(0,0,2,0),
            (0,0,0,0),
            (0,4,-10,0),
            (0,3,-3,5),$\\$
            (0,4,2,10)\}$. Here $P$ may be decomposed into five tropical simplices.  While there is more than one combination of tropical simplices that define $Tr_{e-1}(P)$, we identify $P_\Delta^1=\{(0,0,2,0),(0,0,0,0),$\\$(0,4,-10,0),(0,4,2,10)\}$ with $p_1=0.97$ and $P_\Delta^2=\{(0,0,0,0),(0,4,-10,0),(0,3,-3,5),(0,4,2,10)\}$ with $p_2=0.03$. Using a sample size of, $I=3000$, we sample $P_\Delta^1$ and $P_\Delta^2$. Figure~\ref{fig:uni_results2} shows the results for this example.

 \begin{figure}[H]
    \centering
    \includegraphics[width=0.44\textwidth]{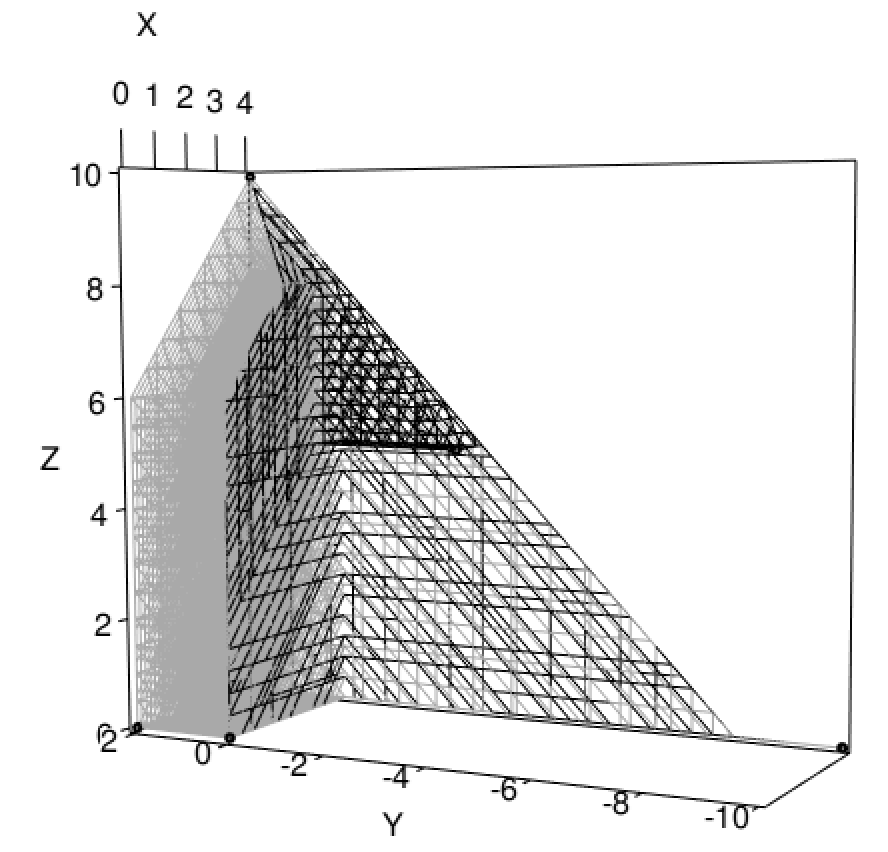}~
    \includegraphics[width=0.44\textwidth]{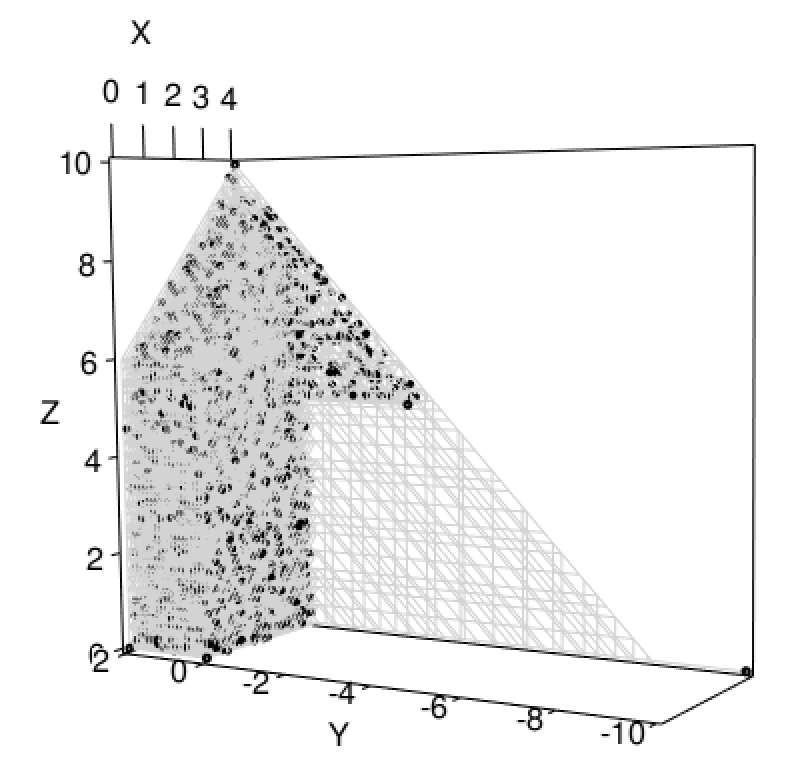}
    \caption{Results for Example~\ref{ex:uni_samp2}. Black and gray colors differentiate between each $P_\Delta^i$ where $\bigcup_{i=1}^2 P_\Delta^i=P'$ (left). Black points represent sampling results from $Tr_{e-1}(P)$ (right).}
    \label{fig:uni_results2}
\end{figure}
\end{example}

\section{Conclusion}

In this paper we show that computing the maximum inscribed and minimum enclosing tropical balls of a tropical polytope can be formulated as linear programming problems and then we applied these tropical balls of a tropical polytope to estimate the volume of and uniform sampling from the tropical polytope over $\mathbb{R}^e/\mathbb{R}{\bf 1}$. 

  Gaubert and Marie MacCaig showed that even estimating the volume of a tropical polytope is NP-hard~\cite{Gaubert}.  The method described here, despite being easily implemented, does not change the difficulty of estimating the volume of a tropical polytope.    In addition to the difficulty of volume estimation, Murty and Oskoorouchi showed that for a given classical polytope $P$ with the vertex representation, i.e, the polytope as a convex hull of a finitely many set of vertices, and for a given point $x_0 \in P$, computing a radius of the largest ball inscribed in $P$ with $x_0$ as the center is NP-hard \cite{Murty}. Therefore, this leads a following problem:
\begin{problem}
What is the time complexity of computing the radius and its center of a maximum inscribed tropical ball given a tropical polytope as a tropical convex set of a finitely many points in $\mathbb{R}^e/\mathbb{R}{\bf 1}$?
\end{problem}
Similary, Shenmaier proved that for a given classical polytope $P$ with the vertex presentation, finding a center and computing a radius of the smallest ball enclosing a polytope $P$ is NP-hard \cite{Shenmaier}.  Thus we have the following open problem:
\begin{problem}
What is the time complexity of computing the radius and its center of a minimum enclosing tropical ball given a tropical polytope as a tropical convex set of a finitely many points in $\mathbb{R}^e/\mathbb{R}{\bf 1}$?
\end{problem}

\appendix
\section{Vertex Hit-and-Run Sampling with Extrapolation}\label{app:min_plus_HAR}

In this appendix we describe the MCMC HAR sampler used in Algorithms~\ref{alg:Vol_Est},~\ref{alg:subset} and~\ref{alg:unisample}. The sampler is defined as a \textit{Vertex HAR Sampler with Extrapolation} algorithm.  This type of sampler samples uniformly from the $(e-1)$-trunk (see Definition~\ref{def:trunk}) of tropical simplices and was originally described in~\citep{Trop_HAR}.  Specifically, in a tropical simplex $P_\Delta:=\tconv(v^1, \ldots, v^s)$ in $\mathbb{R}^e/\mathbb{R}{\bf 1}$ with minimal vertex set $V'$, an ``extrapolated'' point from $v^i \in V'$ through a point $x$ is defined by the projection to the other vertices or $V'^{-i}=\{v^1, \ldots, v^{i-1}, v^{i+1}, \ldots, v^e\}$ as
\begin{equation}\label{eq:tropproj2}
\pi_{V'^{-i}} (x) := 
\bigoplus_{\substack{l=1 \\ l\neq i}}^e \lambda_l \odot v^l~ ~ {\rm where} ~ ~ \lambda_l \!=\! {\rm min}(x-v^l). 
\end{equation}

The authors show that sampling along the line segment $\Gamma_{\pi_{V'^{-i}} (x),v^i}$, is done so uniformly.  However, results showed that sampling was biased because, in general, $\Gamma_{\pi_{V'^{-i}} (x),v^i}$ intersects the boundary of $P_\Delta$, or $\partial P_\Delta$, prior to reaching $v^i$ leading to oversampling of $\partial P_\Delta$.    

To avoid biased sampling of $\partial P_\Delta$, we must detect when $\Gamma_{\pi_{V^{-i}} (x),v^i}$, intersects $\partial P_\Delta$ and ignore the portion of $\Gamma_{\pi_{V^{-i}} (x),v^i}$ that continues on $\partial P_\Delta$ past the initial intersection.  This ensures that the line segment from which we sample lies in the interior of $P$ preventing oversampling $\partial P_\Delta$. While one of the end points of this line segment will be $\pi_{V^{-i}} (x)$, the other will be one of the {\em bend points} of $\Gamma_{\pi_{V^{-i}} (x),v^i}$.

\begin{definition}[Bend point from~\citep{MS}]
Given two points $v^1, \, v^2$, a tropical line segment between $v^1, \, v^2$ denoted as $\Gamma_{v^1,v^2}$, consists of the concatenation of at most $e-1$ Euclidean line segments.  The collection of points $B$, defining each Euclidean line segment are called the {\em bend points} of $\Gamma_{v^1,v^2}$.  Including $v^1$ and $v^2$, $\Gamma_{v^1,v^2}$ consists of at most $e$ bend points. To compute the set $B$ defining $\Gamma_{v^1,v^2}$ we have the following

\begin{equation}\label{eq:troline}
B=\left\{
\begin{array}{ccl}
\!\!\!(v_e \!-\! u_e) \odot u \oplus v \!\!\!\! &=& \!\!\! v\\ 
\!\!\!(v_{e-1} \!-\! u_{e-1}) \odot u \oplus v \!\!\!\!&=&\!\!\! (v_1, v_2, v_3, \ldots , v_{e-1}, v_{e-1} - u_{e-1} + u_e)\\
&\vdots& \\
\!\!\!(v_2 \!-\! u_2) \odot u \oplus v \!\!\!\!&=&\!\!\! (v_1, v_2, v_2 - u_2 + u_3, \ldots , v_2 - u_2 + u_e)\\
\!\!\! (v_1 \!-\! u_1) \odot u \oplus v \!\!\!\!&=&\!\!\! u .\\
\end{array}\right\}.    
\end{equation}.
\end{definition}

Therefore, using the fact that the boundary of the $Tr_{(e-1)}(P_\Delta)$ in a tropical simplex, $P_\Delta$ is the intersection of closed sectors in $\mathcal{A}(V')$, we can detect the intersection of $\Gamma_{\pi_{V'^{-i}} (x),v^i}$, with $\partial P_\Delta$ by successively evaluating the distance from each bend point, $b^j \in B$, of $\Gamma_{\pi_{V'^{-i}} (x),v^i}$ to each $H_{\omega^i}^{min}$.  We compute this distance using Equation~\eqref{eq:hyp_dist_zero} 

\begin{equation}\label{eq:hyp_dist_zero}
    d_{tr}(x,H_{\omega^i}^{min})=d_{tr}(x+\omega,H_{\mathbf{0}}^{min}),
\end{equation}

\noindent which is shown to be true in~\citep{gartner2008tropical}.
If $d_{tr}(b^j,H_{\omega^i}^{min})=0$ for any $\omega^i$, then $\partial P_\Delta$ has been detected.  We then can truncate $\Gamma_{\pi_{V'^{-i}} (x),v^i}$ to $\Gamma_{\pi_{V'^{-i}} (x),b^j}$ and sample from $\Gamma_{\pi_{V'^{-i}} (x),b^j}$ using Algorithm 1 from~\citep{Trop_HAR}. If $\partial P_\Delta$ is not detected for any $b_j\in B$, then we use Algorithm 1 from~\citep{Trop_HAR} to sample along the entirety of $\Gamma_{\pi_{V'^{-i}} (x),v^i}$. This leads to Algorithm~\ref{alg:HAR_extrapolation4}, which randomly subsets $V'$ into subsets, $V'^{-i}$ and $\{v^i\}$ and only samples between $\pi_{V'^{-i}}(x)$ and the first intersection of $\Gamma_{\pi_{V'^{-i}} (x),v^i}$ with $\partial P_\Delta$.

\begin{algorithm}[H]
\caption{Vertex HAR extrapolation sampling from $P$ with truncation using hyperplane distance calculation.} \label{alg:HAR_extrapolation4}
\begin{algorithmic}
\State {\bf Input:} $P_\Delta:=\tconv(v^1, \ldots, v^s)$, an initial point $x_0 \in P_\Delta$ and maximum iteration $I \geq 1$.
\State {\bf Output:} A random point $x \in P_\Delta$.
\State Let, $\Omega$, be the the set of normal vectors, $\omega^i$, defining $H^{\min}_{\omega^i}$ at each $v^i \in V'$.
\For{$k= 0, \ldots , I-1$,}
\State Randomly select a non-empty set ${V'}_{k}^{-i}$ with $v^i \notin {V'}_{k}^{-i}$.
\State Construct $\Gamma_{\pi_{{V'}_{k}^{-i}}(x_k),v^i}$.
\State Order the set of bend points, $B$, defining line segment $\Gamma_{\pi_{V_k^{-i}}(x_k),v^i}$, excluding the end points.
\State Set $j=1$
\While {$j \leq |B|$}
\State Calculate $d_{tr}(b^j,H^{\min}_{\omega^i})$ for each $\omega^i\in \Omega$ where $b^j\in B$ using Equation~\eqref{eq:hyp_dist_zero}.
\If{$d_{tr}(b^j,H^{\min}_{\omega^i})=0$ for some $\omega^i \in \Omega$}
\State Let $\Gamma^k=\Gamma_{\pi_{V_k^{-i}}(x_k),b^j}$.
\Else 
\If{$j<|B|$}
\State Set $j=j+1$
\Else
\State Let $\Gamma^k=\Gamma_{\pi_{V_k^{-i}}(x_k),v^i}$
\EndIf
\EndIf
\EndWhile
\State Generate a random point $x_{k+1}$ from the tropical line segment $\Gamma^k$ using Algorithm 1 from~\citep{Trop_HAR}.
\EndFor \\
\Return $x := x_{I}$.
\end{algorithmic}
\end{algorithm}

\begin{exam}\label{ex:app_ex}
    Let $P_\Delta:=\tconv(\{(0,-1,1),(0,0,0),(0,1,-1)\})$. Figure~\ref{fig:ext_ex} shows the results from sampling 2000 points from $P_\Delta$ using Algorithm~\ref{alg:HAR_extrapolation4}. 
\end{exam}

\begin{figure}[H]
    \centering
    \includegraphics[width=0.4\textwidth]{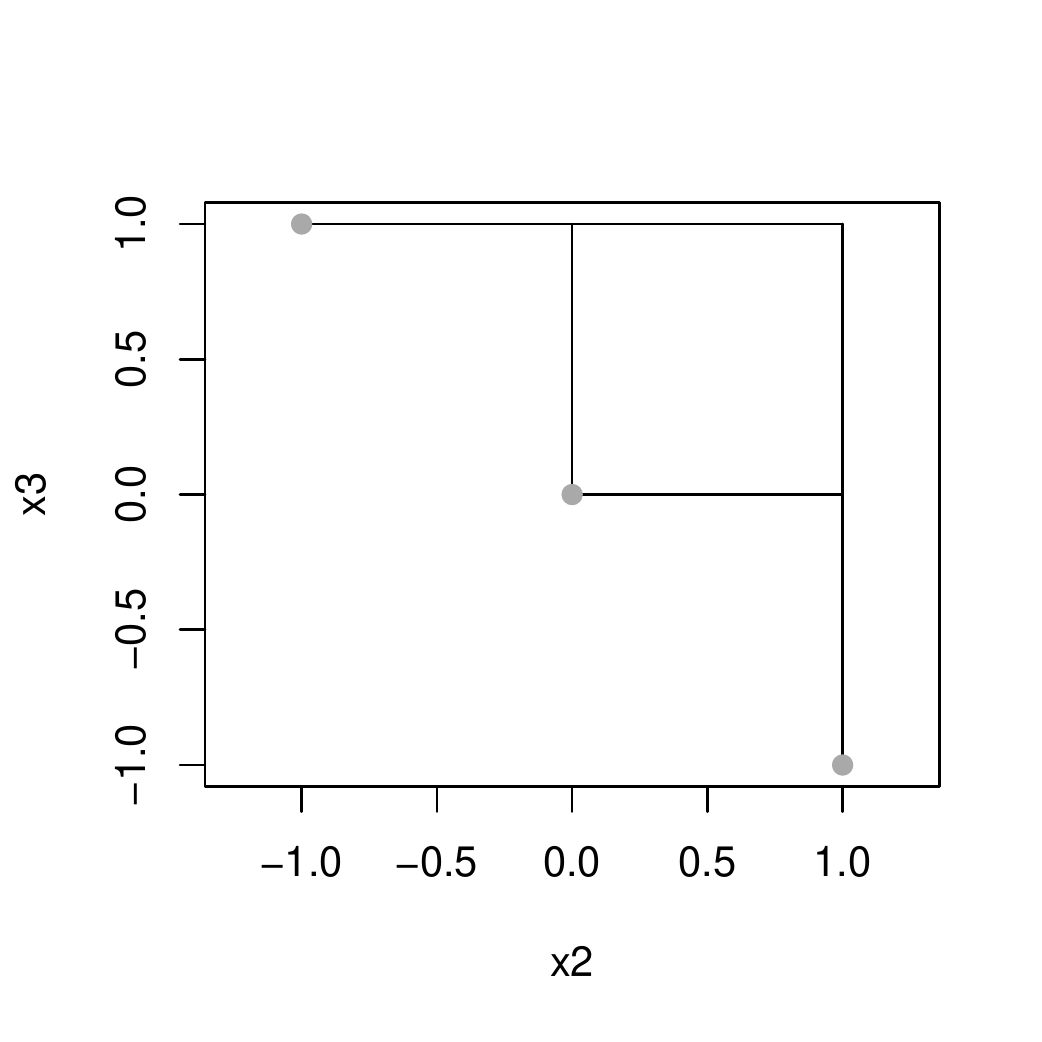}~
    \includegraphics[width=0.4\textwidth]{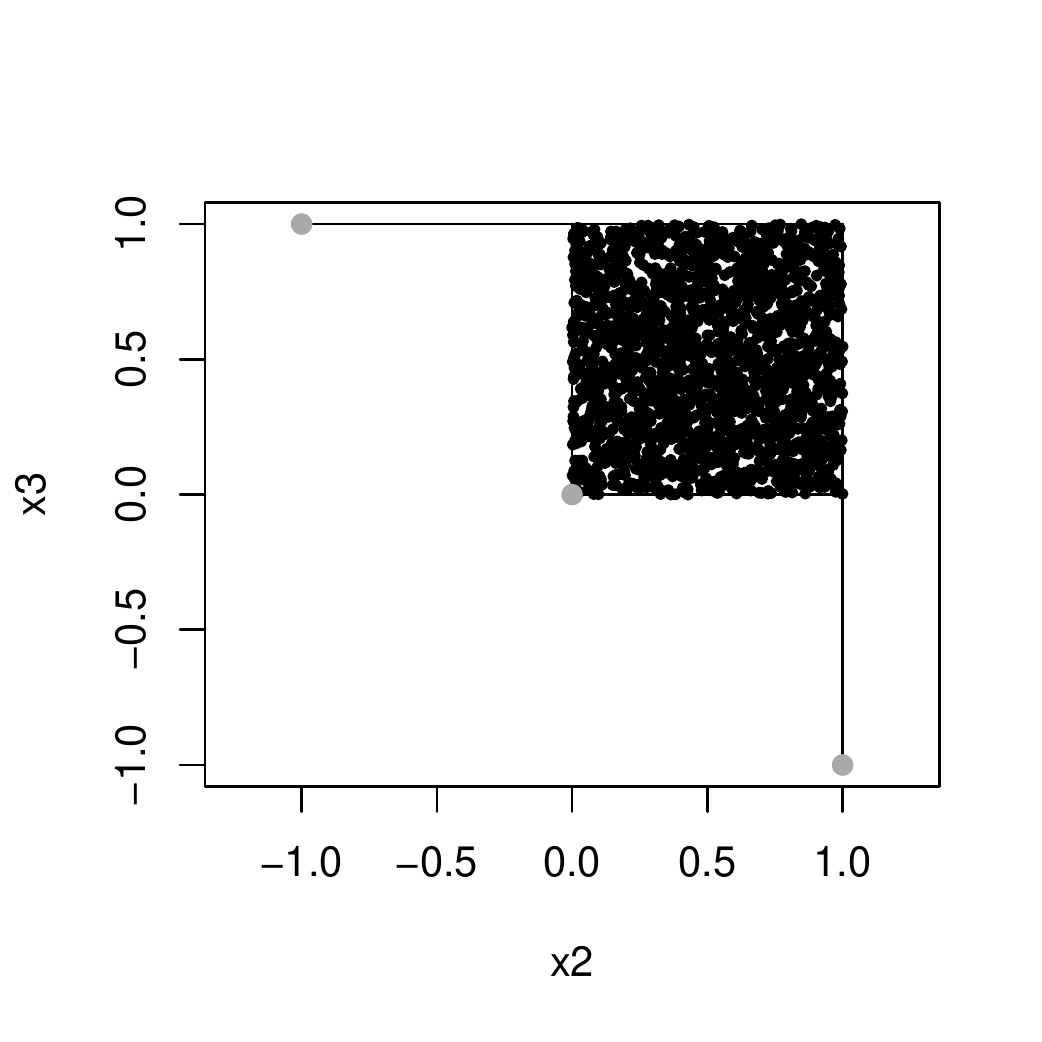}
    \caption{Results for Example A.\ref{ex:app_ex} after sampling 2000 points using Algorithm~\ref{alg:HAR_extrapolation4} for $P_\Delta\in \mathbb{R}^3/\mathbb{R}\mathbf{1}$.}
    \label{fig:ext_ex}
\end{figure}

\section*{Acknowledgement}

The authors thank Profs.~Michael Joswig and Ngoc Tran for useful conversations and discussions. RY and DB are partially supported from NSF DMS 1916037.
KM is partially supported by JSPS KAKENHI Grant Numbers	JP18K11485, JP22K19816, JP22H02364.

\bibliographystyle{plain}  
\bibliography{refs}  

\end{document}